\renewcommand{\thesubsection}{%
  \ifnum\c@subsection<1 \@arabic\c@section
  \else \thesection.\@arabic\c@subsection
  \fi
}
\theoremstyle{definition}
    \newtheorem{defn}{Definition}[section]
    \newtheorem{exm}[defn]{Example}
\theoremstyle{plain}
    \newtheorem{lem}[defn]{Lemma}
    \newtheorem{prop}[defn]{Proposition}
    \newtheorem{cor}[defn]{Corollary}
\theoremstyle{remark}
    \newtheorem{rem}[defn]{Remark}
\renewcommand{\AA}{\mathbb{A}}
\newcommand{\CC}{\mathbb{C}} 
\newcommand{\C}{\mathbb{C}}
\newcommand{\HH}{\mathbb{H}}
\newcommand{\NN}{\mathbb{N}} 
\newcommand{\PP}{\mathbb{P}} 
\newcommand{\RR}{\mathbb{R}}
\newcommand{\bu}{\mathbf{u}}
\newcommand{\cE}{\mathcal{E}}
\newcommand{\cH}{\mathcal{H}}
\newcommand{\cK}{\mathcal{K}}
\newcommand{\cO}{\mathcal{O}}
\newcommand{\fg}{\mathfrak{g}}
\newcommand{\fh}{\mathfrak{h}}
\renewcommand{\sl}{\mathfrak{sl}}
\newcommand{\mc}{\mathcal}
\newcommand{\dual}{^{\ast}}
\renewcommand{\bar}{\overline}
\newcommand{\ovl}{\overline}
\newcommand{\into}{\hookrightarrow}
\newcommand{\eps}{\varepsilon}
\newcommand{\abs}[1]{\left | #1 \right |}  
\newcommand{\norm}[1]{\left\| #1 \right\|} 
\newcommand{\paren}[1]{\left( #1 \right)}
\newcommand{\Pone}{\mathbb{P}^1}
\DeclareMathOperator{\Aut}{Aut} 
\DeclareMathOperator{\Bun}{Bun}
\renewcommand{\Im}{\mathrm{Im\thinspace}}
\DeclareMathOperator{\PGL}{PGL}
\renewcommand{\Re}{\mathrm{Re\thinspace}}
\DeclareMathOperator{\SL}{SL}
\DeclareMathOperator{\tr}{tr}
\title[Analytic Langlands Correspondence for $\PGL_2$ and $\Pone$ with Wild Ramification]{On the Analytic Langlands
Corrrespondence for $\PGL_2$ in Genus 0 with Wild Ramification}
\author{Daniil Klyuev, Atticus Wang}
\begin{document}
\maketitle
\begin{abstract}
    The analytic Langlands correspondence was developed by Etingof, Frenkel and Kazhdan in \cite{EFK1,EFK2,EFK3,EFK4}. For a curve $X$ and a group $G$ over a local field $F$, in the tamely ramified setting one considers the variety $\Bun_G$ of stable $G$-bundles on $X$ with Borel reduction at a finite subset $S\subset X$ of points. On one side of this conjectural correspondence there are Hecke operators on $L^2(\Bun_G)$, the Hilbert space of square-integrable half-densities on $\Bun_G$; on the other side there are certain opers with regular singularities at $S$. In this paper we prove the main conjectures of analytic Langlands correspondence in the case $G = \PGL_2$, $X=\PP^1_{\CC}$ with wild ramification, i.e.~when several points in $S$ are collided together.
    
\end{abstract}
\tableofcontents
\bibliographystyle{hplain}

\section{Introduction}

\subsection{Analytic Langlands correspondence}

In \cite{EFK1,EFK2,EFK3,EFK4}, an analytic version of the Langlands correspondence was formulated for curves over local fields, motivated in part by the works~\cite{BK}, \cite{Ko}, \cite{La}, \cite{Te}. The general setup for the tamely ramified case, which we recount for completeness, is as follows. Let $X$ be a smooth projective irreducible curve over a local field $F$, let $G$ be a connected reductive algebraic group over $F$, and let $S$ be a finite set of $F$-points in $X$. By $\Bun_G(X,S)$ we denote the algebraic stack of $G$-bundles $\cE$ on $X$ with Borel reduction on $S$. On the automorphic side, one considers the Hilbert space $\cH$ of square-integrable half-densities on  the open dense substack of stable bundles in $\Bun_G(X,S)$; in \cite{EFK2}, a commutative algebra of Hecke operators was constructed, initially only on a dense subspace of $\cH$, but conjectured to extend by continuity to compact normal operators on $\cH$. On the spectral side, for the case $F=\CC$, it is conjectured that the joint spectrum of Hecke operators should correspond to the set of $G^{\vee}$-opers with real monodromy, where $G^{\vee}$ is the Langlands dual group of $G$.

In \cite{EFK3}, this recipe was implemented for $G = \PGL_2$, $X = \PP^1$, and $S$ a set of distinct $F$-points $t_0,\dots,t_{m+1}$ in $X$, where $m\ge 1$ (a necessary condition for existence of stable bundles). In this case, a $G$-bundle with Borel reduction at $S=\{t_0,\ldots,t_{m+1}\}$ is simply a rank 2 vector bundle, up to tensoring by line bundles, with distinguished 1-dimensional subspaces in the fibers above the marked points $t_0,\dots,t_{m+1}$. Such bundles are called \emph{quasiparabolic bundles}. In this case, the moduli stack of stable quasiparabolic bundles is known to be a smooth, quasiprojective variety, and is the union of two connected components, bundles of degree 0 and 1, respectively. There are isomorphisms identifying the two components, given by Hecke modification at any of the marked points; so it suffices to consider the degree 0 component $\Bun_G^0$. This space could be parametrized birationally by $\PP^{m-1}$ (\cite{EFK3}, Lemma 3.1): by fixing the lines above $t_0 = 0$ and $t_{m+1}=\infty$, a generic quasiparabolic bundle is uniquely given by $m$ elements of $F$, each specifying the line above $t_1,\dots,t_m$, up to simultaneous scaling. Therefore, $\cH = L^2(\Bun_G^0) = L^2(\PP^{m-1})$ is the space of square-integrable half-densities on $\PP^{m-1}$ (sections of $|\cK|$, where $\cK = \cO(-m)$ is the canonical bundle and $\abs{\cdot}$ is a norm map equal to the usual absolute value in the case of $F=\C$). An element $\psi\in \cH$ can therefore be realized as a complex-valued function $\psi(y_1,\dots,y_m)$ on $F^m\backslash\{0\}$, such that $\psi(z \mathbf{y})=|z|^{-m}\psi(\mathbf{y})$ for any $z\in F^\times$. 

Under this parametrization, the Hecke operators take the following explicit form. For each $x\in \PP^1\backslash\{t_0,\ldots,t_{m+1}\}$, the Hecke operator $H_x$ is given by
\begin{equation}
\label{hecke-original}
	(H_x\psi)(y_1,\dots,y_m) =  \paren{\prod_{i=0}^m |t_i-x|}\cdot\int_{\CC} \psi\paren{\frac{t_1s-xy_1}{s-y_1},\cdots,\frac{t_ms-xy_m}{s-y_m}}\frac{|s|^{m-2} \abs{ds}^2}{\prod_{i=1}^m |s-y_i|^2}.
\end{equation}
It was shown in \cite{EFK3}, Section 3 that $H_x$ extend to compact, self-adjoint, mutually commuting operators on $\cH$, with zero common kernel. Importantly, this relies on the fact that $H_x$ is given by integrating certain unitary operators $U_{s,x}$ over $s\in F$.

Now let $F=\C$. The next important step is the differential equation for Hecke operators:
\begin{equation}
\label{EqDifferentialEquationClassic}
\paren{\partial_x^2+\sum_{i\geq 0}\frac{1}{4(x-t_i)^2}}H_x-H_x\sum_{i\geq 0}\frac{G_i}{x-t_i}=0.
\end{equation}
Here $G_i$ ($0\le i\le m$) are certain commuting holomorphic differential operators on $\cH$. One of the consequences of~\cref{EqDifferentialEquationClassic} is that although these $G_i$ are unbounded operators, they commute with $H_x$ in a certain well-defined sense, so that we get a good spectral problem for both Hecke and differential operators (since Hecke operators are compact self-adjoint). A second consequence is that in this case ($\PGL_2$ and $\PP^1$ over $\CC$), the joint eigenvalues $\beta_k(x)$ (real-valued and continuous in $x$, labeled by $k\in\NN$) satisfy the following differential equation (\cite{EFK3}, Corollary 4.14):
 \begin{equation}
 \label{diff-eq-original}
 	\paren{\partial_x^2 + \frac14\sum_{i=0}^m \frac{1}{(x-t_i)^2} - \sum_{i=0}^m \frac{\mu_{i,k}}{x-t_i}}\beta_k(x) = 0,
 \end{equation}
which is an $\SL_2(\CC)$-oper (i.e.~no $\partial_x$ term); $\SL_2$ is Langlands dual to $\PGL_2$. Here $\mu_{i,k}\in \CC$ are eigenvalues of $G_i$ on the eigenfunction $\psi_k$ corresponding to $\beta_k$ (in particular it was shown that the joint spectrum of $H_x$ is simple). Moreover, the monodromy representation of such a differential equation (where $\mu_{i,k}$ are now parameters in $\CC$) lands in $\SL_2(\RR)$ up to conjugation if (and, partially, only if) they come from a joint eigenfunction of Hecke operators (\cite{EFK3}, Theorem 4.15), thus establishing analytic Langlands correspondence.

\subsection{Summary of our paper}

In this paper we investigate what happens when we collide several points among $t_i$, i.e. when $S$ is no longer a reduced divisor. For example, suppose we merge only $t_0$ and $t_1$. One obvious way of obtaining a limit of Hecke operators is to simply set $t_0=t_1$ in \cref{hecke-original}; this corresponds to choosing two lines in the fiber of the quasiparabolic bundle above the closed point $t_0=t_1$. However, the resulting Hecke operators will not be compact and will have continuous spectrum, hence they will have no eigenvectors.

Instead, we will make $t_0=t_1$ a non-reduced point, in this case a $\CC[\eps]/(\eps^2)$-point. A generic line in its fiber is given by $(1,u_0+u_1\eps)$, so that in \cref{hecke-original} one should change variables $y_0,y_1$ by $u_0=y_0$, $u_1 = \frac{y_1-y_0}{t_1-t_0}$. In order to have a well-defined limit as $t_1\to t_0$, we should also use a twisted version of Hecke operators, whose twisting parameters are sent to infinity in an appropriate way. 

We carry this out in \cref{SecLimitAndDefinition}, obtaining limits of Hecke operators $H_x$. We use this computation as a motivation for the following definition of modified Hecke operator:
    \begin{align*}
  \HH_x\psi(\bu_0,\ldots,\bu_m)=\int_{\C}\psi\bigg(\frac{t_0+\eps_0-x}{s-\bu_0},\ldots,\frac{t_m+\eps_m-x}{s-\bu_m}\bigg)\frac{\exp\bigg(\sum\chi_i\big(\log(s-\bu_i)\big)\bigg)ds\ovl{ds}}{\prod_{i=0}^m \abs{s-u_i^{(0)}}^{2n_i+2}}
    \end{align*}

Here $\bu_0,\ldots,\bu_m$ parametrize fibers over non-reduced points corresponding to $t_0,\ldots,t_m$, $\chi_i$ are certain $\RR$-linear maps to $i\RR$. Non-modified Hecke operator $H_x$ is obtained as $\HH_x$ times a certain function of $x$. 

In \cref{SecUnitaryRep} we prove that $H_x$ given by integrating some unitary representation $U_{s,x}$ over $s\in \CC$ (\cref{unitary}). The measure is the same as in~\cite{EFK3,EFK4}. In \cref{SecBounded} we prove that $H_x$ are bounded self-adjoint operators on $\cH$ that commute with each other. Moreover, in \cref{SecCompactness} we show that $H_x$ are compact and norm-continuous. In \cref{SecSpectralDecomp} we show that $\{H_x\}$ have zero common kernel, and therefore they have a joint discrete spectrum with finite-dimensional eigenspaces. In other words, we recover the main properties of Hecke operators required for establishing analytic Langlands correspondence in our case.

Wildly ramified case was briefly considered in~\cite{EFK4}, Section 2.14. It can be shown that our approach fits into a general definition of ramified analytic Langlands correspondence. Moreover, we prove that there is strong limit $\lim_{\eps\to 0}U_{\eps}^{-1}H^{\lambda(\eps)}_xU_{\eps}=H_{\text{ramified},x}$, where $U_{\eps}$ is the unitary operator corresponding to the coordinate change $(y_0,\ldots,y_n)\mapsto (u_0,\ldots,u_n)$ below and $H_{\lambda(\eps)}$ are twisted Hecke operators with twisting parameters and marked points depending on $\eps$.  

In \cref{SecGaudin} we define a family of commuting differential operators $G_i^{(j)}$ and prove the analogue of the differential equation~\eqref{EqDifferentialEquationClassic}. As a corollary, we obtain a differential equation on eigenvalues $\beta_k(x)$ that corresponds to an $SL_2$-oper, partially establishing analytic Langlands correspondence in this case. We also prove that an eigenvector $\psi\in \mc{H}_k$ satisfies \[G_i^{(j)}\psi=\mu_{i,j,k}\psi\] in the sense of distributions. We expect that the $D$-module corresponding to this system of equations on $\psi$ is irreducible, hence $\psi$ is unique up to scaling and $\mc{H}_k$ is one-dimensional. We also expect that $\psi$ are smooth on an open subset of $\Bun_G$ and that $G_i^{(j)}$ have a natural self-adjoint extensions that strongly commute with each other and with Hecke operators.

However, the important difference with~\cite{EFK3} is that the differential equation on $\beta_k(x)$ will no longer have regular singularities at $t_i$. Namely it will have irregular singularities at the merged points (wild ramification). So the condition of real monodromy is not enough, and there should be a condition on the Stokes data or asymptotic expansion of solutions at irregular singularities. This is currently under investigation.

It can be computed that the limit of generating function for twisted Gaudin operators after coordinate change, $\sum_{i=0}^m \frac{U_{\eps}^{-1}G_{i,\lambda(\eps)}U_{\eps}}{x-t_i}$, gives the generating function $\sum_{i=0}^m\sum_{j=0}^{n_i} \frac{G_i^{(j)}}{(x-t_i)^{j+1}}$ defined in \cref{SecGaudin}. Here $n_i+1$ is the multiplicity of point $t_i$.


\section{Preliminaries}

Throughout the paper we will assume that $F=\C$. Here we collect background material and auxilliary lemmas that are used in the proof of the main results.

\subsection{Non-reduced point with parabolic structure}

Fix an integer $n\ge 0$ and write $\CC[\eps] = \CC[x]/(x^{n+1})$. As mentioned in the introduction, let us consider a $\CC[\eps]$-point $t_0$ on $\PP^1$ with parabolic structure, i.e.~there is a chosen rank-1 free $\CC[\eps]$-submodule of $\CC[\eps]^{\oplus 2}$, the fiber of the quasiparabolic bundle $\cO^{\oplus 2}$ above $t_0$. Generically, one may assume it is the line spanned by $(1, \sum_{k=0}^{n} u_{k} \eps^k)$. 

Let $x\neq t_0$ be a closed point, and $s$ a line in the fiber above $x$. After Hecke modification at $(x, s)$ (and rewriting in terms of the original parametrization, see \cite{EFK3}, sections 3.1, 3.2), the line $(1, \sum_{k=0}^{n} u_{k} \eps^k)$ becomes $\paren{\sum_{k=0}^{n} u_{k}\eps^k - s, t_0-x+\eps}$. Part (a) of \cref{identities} below shows that this is just the line $(1, -\sum_{k=0}^{n}  \frac{1}{k!} \partial^k(\frac{t_0-x}{s-u_0})\eps^k)$.

\begin{defn}
	Consider the field $\CC(s,x,t_0,u_{i,j})$ generated formally by these symbols, where $0\le j\le i\le n$. Write $u_i := u_{i,i}$. Define a derivation $\partial$ on this field, defined by $\partial s = \partial x = 0$, $\partial t_0 = 1$, and $\partial u_{i,j} = (j+1)u_{i+1, j+1}$. Finally, for purely imaginary numbers $a_j$, define $D=-2i\sum_{j=1}^n \frac{a_j}{j!} \partial^j$, where $i$ is an imaginary unit.
\end{defn}

\begin{prop}
\label{identities}
We have the following identities:
\begin{enumerate}[(a)]
	\item $t_0-x+\eps = \paren{\sum_{k=0}^{n} \frac{1}{k!}\partial^k(\frac{t_0-x}{s-u_0}) \eps^k}\paren{s-\sum_{k=0}^{n} u_{k}\eps^k}$; \\
	\item For $0\le m\le n$, $\frac{1}{m!}\partial^m(\log(s-u_0)) = [\eps^m]\log(s-\sum_{k=0}^{n} u_{k}\eps^k)$.
\end{enumerate}
\end{prop}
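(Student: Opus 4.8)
The plan is to deduce both identities from one structural fact: the $\CC$-linear operator
\[
\Phi \;:=\; \sum_{k=0}^{n}\frac{\eps^k}{k!}\,\partial^k \;\colon\; K:=\CC(s,x,t_0,u_{i,j}) \longrightarrow K[\eps]/(\eps^{n+1})
\]
(the \emph{formal flow} of $\partial$, with $\eps^{n+1}=0$ as in the paper) is a ring homomorphism. First I would check multiplicativity: for $m\le n$ the coefficient of $\eps^m$ in $\Phi(f)\Phi(g)$ is $\sum_{k+l=m}\tfrac{1}{k!\,l!}\,\partial^k f\,\partial^l g=\tfrac1{m!}\partial^m(fg)$ by the Leibniz rule and the binomial theorem, the truncation being harmless since $k,l\le m\le n$ automatically; and $\Phi(1)=1$. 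Since $\Phi(f)$ has constant term $f$, a unit in the field $K$, $\Phi(f)$ is a unit in $K[\eps]/(\eps^{n+1})$ whenever $f\neq 0$, so $\Phi$ also respects quotients. Then I would record two computations straight from the definition of $\partial$: induction on $k$ using $\partial u_{i,i}=(i+1)u_{i+1,i+1}$ gives $\partial^k u_0=k!\,u_k$, hence $\Phi(s-u_0)=s-\sum_{k=0}^{n}u_k\eps^k$; and $\partial t_0=1$, $\partial^2 t_0=0$ give $\Phi(t_0-x)=t_0-x+\eps$.

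Part (a) then falls out by applying $\Phi$ to $\tfrac{t_0-x}{s-u_0}$ in two ways: by definition it is $\sum_{k=0}^{n}\tfrac{\eps^k}{k!}\,\partial^k\!\paren{\tfrac{t_0-x}{s-u_0}}$, and by multiplicativity plus the two computations above it is also $\paren{t_0-x+\eps}\big/\paren{s-\sum_{k}u_k\eps^k}$ in $K[\eps]/(\eps^{n+1})$. Equating these and clearing the unit $s-\sum_k u_k\eps^k$ is exactly the identity of part (a), read (as it must be, and as it is used, since $\eps^{n+1}=0$) as an equality in $K[\eps]/(\eps^{n+1})$, i.e.\ of the coefficients of $\eps^0,\dots,\eps^n$.

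For part (b) the first task is to make the two sides precise, $\log$ not being rational: treat $\log(s-u_0)$ as a formal symbol but set $\partial\log(s-u_0):=\partial(s-u_0)/(s-u_0)=-u_1/(s-u_0)\in K$, so $\partial^m\log(s-u_0)\in K$ for $m\ge 1$; and set $\log\!\paren{s-\sum_k u_k\eps^k}:=\log(s-u_0)+\log(1-v)$ with $v:=\paren{\sum_{k\ge 1}u_k\eps^k}/(s-u_0)$ nilpotent, so its coefficient of $\eps^m$ lies in $K$ for $m\ge 1$ and is the symbol $\log(s-u_0)$ for $m=0$ — making the $m=0$ case of (b) a tautology. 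For $m\ge 1$, write $L(\eps):=\log\Phi(s-u_0)$. Directly from the definition of $\Phi$ one has the identity $\tfrac{d}{d\eps}\Phi(f)\equiv \Phi(\partial f)\pmod{\eps^{n}}$ (the two differ only by $\tfrac{\eps^n}{n!}\partial^{n+1}f$). Applying it with $f=s-u_0$, together with the chain rule $\tfrac{d}{d\eps}L(\eps)=\big(\tfrac{d}{d\eps}\Phi(s-u_0)\big)/\Phi(s-u_0)$ and the multiplicativity of $\Phi$, gives
\[
\frac{d}{d\eps}L(\eps)\;\equiv\;\Phi\!\paren{\frac{\partial(s-u_0)}{s-u_0}}\;=\;\Phi\bigl(\partial\log(s-u_0)\bigr)\;=\;\sum_{j=0}^{n}\frac{\eps^j}{j!}\,\partial^{j+1}\log(s-u_0)\pmod{\eps^{n}}.
\]
Comparing coefficients of $\eps^{m-1}$ for $1\le m\le n$ yields $m\cdot[\eps^m]L(\eps)=\tfrac{1}{(m-1)!}\,\partial^m\log(s-u_0)$, i.e.\ $[\eps^m]L(\eps)=\tfrac{1}{m!}\,\partial^m\log(s-u_0)$, which is (b).

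The substantive content is classical — that $\exp(\eps\partial)$ is an algebra map intertwining $\partial$ with $d/d\eps$ — specialized to the two elements $t_0-x$ and $s-u_0$; I expect no real obstacle there. The only place to be careful is bookkeeping: fixing the meaning of the symbol $\log$ and of the coefficient extraction $[\eps^m]$, and keeping straight the one-degree shift between working modulo $\eps^{n+1}$ and modulo $\eps^{n}$ that appears as soon as one differentiates a truncated series.
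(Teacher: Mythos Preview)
Your proof is correct and follows essentially the same route as the paper: your operator $\Phi$ is exactly the paper's Taylor map $T(X)=\sum_{k=0}^{n}\tfrac1{k!}\partial^k(X)\eps^k$, and part (a) is deduced identically from multiplicativity applied to $(t_0-x)=\tfrac{t_0-x}{s-u_0}\cdot(s-u_0)$. For part (b) the paper simply asserts $T(\log(C-X))=\log(T(C-X))$; your argument via the intertwining relation $\tfrac{d}{d\eps}\Phi\equiv\Phi\circ\partial\pmod{\eps^n}$ is a more careful justification of that same statement, so the two proofs are the same in spirit with yours filling in the details the paper leaves to the reader.
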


\begin{proof}
For any $X\in \CC(s,x,t_0,u_{i,j})$, consider its Taylor series $T(X) = \sum_{k=0}^{n} \frac{1}{k!}\partial^k (X)\eps^k$. It is easily checked that $T(X_1X_2) = T(X_1)T(X_2)$ and $T(\log(C-X))=\log(T(C-X))$, for any constant $C$ (i.e.~$\partial C = 0$). Part (a) is simply 
$T(t_0-x)= T(\frac{t_0-x}{s-u_0})T(s-u_0)$, and part (b) follows from $T(\log(s-u_0))=\log(T(s-u_0))$.
\end{proof}

\begin{rem}
    In particular, $D\log(s-\sum_{k=0}^{n} u_{k}\eps^k)$ can be computed as the integral
    \[-\frac{1}{2\pi}\int_{\abs{z}=1}(\sum a_j z^{-j-1})\log(s-\sum_{k=0}^n u_kz^k)dz.\]
\end{rem}

It follows that $i\Re D(\log(s-u_0))=\chi(\log(s-\sum_{k=0}^n u_k\eps^k))$, where $\chi$ is an $\mathbb{R}$-linear map $\chi:\C[\eps]\to i\RR$ given by $\chi(\sum b_j\eps^j)=\sum a_j \Re b_j$. 

Below we will consider $\chi$ of the form $\chi(\sum_{j=0}^n b_j\eps^j)=i\Re\sum_{i=0}^n c_jb_j$, where $c_j$ are not required to be real. However, we require $\Im c_0$ to be integer, so that $\exp\circ\chi\circ\log$ is well-defined. We also require $c_n$ to be nonzero below. Nonzero $\Re c_0$ corresponds to twisting, nonzero $\Im c_0$ corresponds to taking non-spherical principal series representations of $\PGL_2(\C)$.
\begin{rem}
The $\chi$ of the form $\sum a_j\Re b_j$ arise from taking the limit of twisted Hecke operators in a specific way. We expect that by changing the limiting procedure for $t_i$ or $\lambda_i$, we can obtain any imaginary functional $\chi$ on $\C[\eps]$ with $c_0$ real. If we take limit of Hecke operators corresponding to any principal series representation of $\PGL_2(\C)$, we expect to get any imaginary $\chi$ as above.
\end{rem}

\subsection{Representations of $\PGL_2(\C[\eps])$.}
\label{SecRepresentationsOfGEps}
Recall that the group $\PGL_2(\CC)$ has a natural right action on square-integrable half-densities on $\PP^1$, given by
\[\begin{pmatrix}
a & b \\
c & d	
\end{pmatrix} f(z) = \frac{|ad-bc|}{|cz+d|^2} f\paren{\frac{az+b}{cz+d}}.\]

Let $\CC[\eps] = \CC[x]/(x^{n+1})$. The group $\PGL_2(\CC[\eps])$ acts naturally on $\PP^1(\CC[\eps])$ by
\[\begin{pmatrix}
    a & b \\
    c & d
\end{pmatrix}(z) = \frac{az + b}{cz+d}.\]
Suppose we identify $z = u_0+\dots+u_n\eps^{n}$ with $(u_0,\dots,u_n)\in \CC^{n+1}$, with the usual measure. Then for any $\chi:\C[\eps]\to i\RR$ as above we can define a right unitary representation $\rho$ of $\PGL_2(\CC[\eps])$ on $L^2(\CC^{n+1})$, by
\[\rho(g)f(z) = f(gz)\cdot \abs{\frac{\det g_0}{(c_0u_0+d_0)^2}}^{n+1} \cdot\exp\bigg(\chi\big(\log(cz+d)-\tfrac12\log(\det g)\big)\bigg),\]
where $g = \begin{pmatrix}
    a & b \\
    c & d
\end{pmatrix}\in \PGL_2(\CC[\eps])$, and $g_0 = \begin{pmatrix}
    a_0 & b_0 \\
    c_0 & d_0
\end{pmatrix}$ is the constant part of $g$. Since we assumed that $\chi(\sum b_j\eps^j)=i\Re\sum c_jb_j$ with nonzero $c_n$, it is well-known that this representation is irreducible.

We also note the following. Let $B$ be the Borel subgroup of $\PGL_2$ of upper-triangular matrices, so that $\Pone(\C[\eps])\cong \PGL_2(\C[\eps])/B(\C[\eps])$. Consider the line bundle $\mc{L}=O(-1)\otimes \mc{L}_{\eps}$ on $\Pone(\C[\eps])$, where $\mc{L}_{\eps}$ corresponds to the one-dimensional representation of $B(\C[\eps])$ given by $\begin{pmatrix}
    a & b\\
    0 & a^{-1}
\end{pmatrix}\mapsto \exp(\chi(\log(a)))$. Then $\rho$ corresponds to the action of $\PGL_2(\C[\eps])$ on the square-integrable sections of line bundle $\mc{L}\otimes \ovl{\mc{M}}$, where $\mc{M}=O(-1)\otimes \mc{L}_{\eps}^{-1}$.

\subsection{Lemmas about Lie groups}

\begin{lem}
\label{pavel}

Let $G_1,G_2$ be Lie groups, and $K$ a closed subgroup of $G_1\times G_2$  that surjects onto both $G_1$ and $G_2$. Let $L\subset G_2$ be the kernel of $K\into G_1\times G_2\to G_1$. Then $L\lhd G_2$, and $K$ is the preimage in $G_1\times G_2$ of  the graph of a smooth homomorphism $f:G_1\to G_2/L$.
\end{lem}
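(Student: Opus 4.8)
The plan is to prove this as the Lie-group refinement of Goursat's lemma. Write $p_1\colon K\to G_1$ and $p_2\colon K\to G_2$ for the restrictions to $K$ of the two coordinate projections; both are surjective homomorphisms by hypothesis, and I identify $L$ with the closed subgroup $\{e\}\times L=\ker p_1\subseteq K$ (closed in $\{e\}\times G_2\cong G_2$ because $K$ is closed in $G_1\times G_2$). First I would check normality: given $\ell\in L$ and $g_2\in G_2$, surjectivity of $p_2$ lets me pick $g_1\in G_1$ with $(g_1,g_2)\in K$, and then $(g_1,g_2)(e,\ell)(g_1,g_2)^{-1}=(e,g_2\ell g_2^{-1})\in K$, so $g_2\ell g_2^{-1}\in L$. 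Hence $L\lhd G_2$; being closed, $G_2/L$ is a Lie group and the quotient map $q\colon G_2\to G_2/L$ is a submersion.

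Next I would construct $f$ and verify the graph description set-theoretically. For $g_1\in G_1$, choose any $g_2$ with $(g_1,g_2)\in K$ (possible since $p_1$ is onto); if $(g_1,g_2')$ is another such element then $(e,g_2^{-1}g_2')\in K$, so $g_2^{-1}g_2'\in L$ and the coset $g_2L$ is well defined. This yields a map $f\colon G_1\to G_2/L$, $f(g_1)=g_2L$, characterized as the unique map with $f\circ p_1=q\circ p_2$; it is a homomorphism because $p_1$ is a surjective homomorphism and $q\circ p_2$ is a homomorphism. Writing $\Gamma_f\subseteq G_1\times(G_2/L)$ for its graph, the inclusion $K\subseteq(\id\times q)^{-1}(\Gamma_f)$ is exactly the definition of $f$; conversely, if $q(g_2)=f(g_1)$, pick $(g_1,g_2^{0})\in K$, note $g_2L=g_2^{0}L$ forces $(g_2^{0})^{-1}g_2\in L$, i.e.\ $(e,(g_2^{0})^{-1}g_2)\in K$, so $(g_1,g_2)=(g_1,g_2^{0})\cdot(e,(g_2^{0})^{-1}g_2)\in K$. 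Thus $K=(\id\times q)^{-1}(\Gamma_f)$.

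It remains to show $f$ is smooth, and this is the step I expect to be the real obstacle, since it is where the Lie-theoretic (as opposed to purely group-theoretic) content enters. By Cartan's closed-subgroup theorem $K$ is an embedded Lie subgroup of $G_1\times G_2$, so $p_1\colon K\to G_1$ is a surjective Lie group homomorphism with kernel $L$; because the groups in play are second countable, the induced map $\bar p_1\colon K/L\xto{\ \sim\ }G_1$ is an isomorphism of Lie groups (a surjective homomorphism of second-countable Lie groups is a submersion). Since $\ker(q\circ p_2)\supseteq\ker p_1=L$, the homomorphism $q\circ p_2$ descends to a smooth homomorphism $K/L\to G_2/L$ (smoothness descends because $K\to K/L$ is a submersion), and $f=\big(q\circ p_2\ \text{descended}\big)\circ\bar p_1^{-1}$ is the desired smooth homomorphism. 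The two facts being invoked here — that a closed subgroup is embedded, and that a surjective homomorphism of second-countable Lie groups is a submersion whose coimage-to-image comparison map is an isomorphism — are standard but are the crux; everything else is the routine bookkeeping of Goursat's lemma. (In the applications of this paper all relevant groups are finite-dimensional and have countably many components, so the second-countability hypothesis is harmless.)
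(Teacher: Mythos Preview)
Your proof is correct and follows essentially the same Goursat-lemma approach as the paper: prove $L\lhd G_2$ by conjugating $(e,\ell)$ by an arbitrary $(g_1,g_2)\in K$, then identify $K$ with the preimage of a graph via $K/L\cong G_1$. You are in fact more careful than the paper about the smoothness of $f$, explicitly invoking Cartan's theorem and the submersion property of surjective homomorphisms between second-countable Lie groups, whereas the paper simply asserts that in the $L=1$ case the bijection $K\to G_1$ is an isomorphism.
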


\begin{proof}
	In the case $L = 1$, $K\to G_1$ is an isomorphism, so $f$ is given by its inverse composed with the map $K\into G_1\times G_2\to G_2$.  In general, suppose $(1,\ell)\in L \subset K$. For any $g_2\in G_2$, there exists $(g_1,g_2)\in K$, so $(g_1,g_2)^{-1}(1,\ell)(g_1,g_2) = (1, g_2^{-1}\ell g_2)\in K$, so $L$ is normal. Let $K'$ be the image of $K$ in $G_1\times (G_2/L)$. Then we may apply the $L=1$ case to $K'$, and $K$ is the preimage in $G_1\times G_2$ of the graph of a $f:G_1\to G_2/L$.
\end{proof}

\begin{lem}
\label{C[eps]-maps}
	The nontrivial Lie group homomorphisms $f:G(\CC[\eps])\to G(\CC)$ are all of the form $\psi\circ \pi$, where $\pi:G(\CC[\eps])\to G(\CC)$ is projection to constant term, and $\psi\in \Aut(G(\CC))$. In particular, they are surjective.
\end{lem}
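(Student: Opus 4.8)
The plan is to show that any nontrivial $f$ kills the congruence subgroup $U=\ker\pi$ and then descends along $\pi$ to an automorphism of $G(\CC)$; for this I will argue on Lie algebras. Take $G$ to be (almost) simple — the relevant case being $\PGL_2$ — and write $\CC[\eps]=\CC[x]/(x^{n+1})$, $\iota\colon G(\CC)\into G(\CC[\eps])$ for the inclusion of constant sections (so $\pi\circ\iota=\id$), $U=\ker\pi=\exp(\fn)$ with $\fn=\fg\otimes x\CC[x]/(x^{n+1})$. As $U$ is unipotent it is connected, so $G(\CC[\eps])$, being an extension of the connected group $G(\CC)$ by $U$, is connected; hence once we know $U\subseteq\ker f$ we may factor $f=\psi\circ\pi$ with $\psi:=f\circ\iota\colon G(\CC)\to G(\CC)$, and only $\psi\in\Aut(G(\CC))$ remains.

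First I would pass to the real Lie algebras and set $\phi:=df\colon\fg\otimes\CC[\eps]\to\fg$, $\phi_0:=\phi|_{\fg\otimes 1}$. Since $\fg$, regarded over $\RR$, is simple, $\ker\phi_0$ is $0$ or all of $\fg$. If $\phi_0=0$, then $[\fg\otimes 1,\fg\otimes x]=\fg\otimes x$ (perfectness of $\fg$) forces $\phi(\fg\otimes x)=0$, whence $\phi(\fg\otimes x^k)=\phi([\fg\otimes x,\fg\otimes x^{k-1}])=0$ for $k\ge 2$ and $\phi\equiv 0$, so $f$ is trivial — excluded. Thus $\phi_0$ is injective, and by equality of real dimensions an isomorphism of $\fg$. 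Now filter $\fn$ by the ideals $\fn_k=\fg\otimes x^k\CC[x]/(x^{n+1})$, $1\le k\le n$, for which $[\fn_k,\fn_k]\subseteq\fn_{k+1}$, and induct downward: if $\phi(\fn_{k+1})=0$ then $\phi(\fn_k)$ is abelian; moreover $\fn_k$ is an ideal and $\phi(\fg\otimes\CC[\eps])\supseteq\phi_0(\fg)=\fg$, so $\phi(\fn_k)$ is an ideal of $\fg$; an abelian ideal of a semisimple Lie algebra vanishes, so $\phi(\fn_k)=0$. Hence $\phi(\fn)=0$, and since $U=\exp(\fn)$ is connected, $U\subseteq\ker f$.

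It remains to identify $\psi=f\circ\iota$. Its differential is $\phi_0$, an isomorphism, so $\psi$ is a covering homomorphism onto an open — hence, by connectedness, full — subgroup; thus $\psi$ is a surjective covering of $G(\CC)$ with discrete, hence central, kernel. For $G=\PGL_2$ the center of $\PGL_2(\CC)$ is trivial, so this kernel is trivial and $\psi\in\Aut(G(\CC))$ at once; in general one lifts $\psi$ to an automorphism of the universal cover (whose center is finite, $G$ being semisimple) and checks the kernel is trivial. Finally $\phi$ annihilates $\fn=\ker d\pi$ and restricts to $\phi_0$ on $\fg\otimes 1$, so $df=\phi=\phi_0\circ d\pi=d(\psi\circ\pi)$; as $G(\CC[\eps])$ is connected this gives $f=\psi\circ\pi$, which is surjective since $\psi$ and $\pi$ are. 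The crux is the downward induction giving $\phi(\fn)=0$ — which genuinely needs $\phi_0$ to be established as an isomorphism beforehand so that $\phi(\fn_k)$ is an honest ideal of $\fg$; the rest is bookkeeping at the group/Lie-algebra interface (connectivity of $G(\CC[\eps])$, and covering endomorphism $\Rightarrow$ automorphism).
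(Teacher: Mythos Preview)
Your proof is correct and follows the same overall architecture as the paper's: pass to Lie algebras, show the restriction $\phi_0=df|_{\fg}$ is an isomorphism, then kill $\fn=\fg\otimes\eps\CC[\eps]$ and descend. The difference lies in how you kill $\fn$. The paper observes that every element of $\fn$ is ad-nilpotent in $\sl_2(\CC[\eps])$, hence its image under $df$ is ad-nilpotent in $\sl_2(\CC)$; since $df(\fn)$ is an ideal of $\sl_2(\CC)$ consisting of ad-nilpotent elements, it must be zero. Your argument instead filters $\fn$ by the $\fn_k$ and inducts downward using ``abelian ideal of a semisimple Lie algebra vanishes.'' Both are short; the paper's is a one-liner once one knows $\phi_0$ is onto, while yours avoids invoking the ad-nilpotent classification and makes the dependence on $\phi_0$ being surjective explicit. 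You are also more careful than the paper on two points the paper elides: verifying that $df|_{\sl_2(\CC)}\ne 0$ (not immediate from $df\ne 0$), and handling the passage from $df=d(\psi\circ\pi)$ back to $f=\psi\circ\pi$ via connectedness of $G(\CC[\eps])$.
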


\begin{proof}
	Pass to Lie algebra homomorphism $df: \sl_2(\CC[\eps])\to \sl_2(\CC)$. The restriction of $df$ on $\sl_2(\CC)\subset \sl_2(\CC[\eps])$ is an inner automorphism, since $\sl_2(\CC)$ is simple and $df$ is not identically zero. Since every element in $\sl_2(\CC[\eps])$ with zero constant term is ad-nilpotent, we conclude that they lie in the kernel of $df$. So $f$ is an automorphism precomposed with projection as well.
\end{proof}

\section{Limits of Hecke operators}

\label{hecke}

\subsection{Twisted Hecke operators}

Let $t_0,\dots,t_{m+1}\in \PP^1$. Without loss of generality, let us fix $t_{m+1}=\infty$. Let $x\in \PP^1$, $x\neq t_i,\infty$. Let $\lambda=(\lambda_0,\dots,\lambda_{m+1})$ be twisting parameters, which are complex numbers satisfying $\Re\lambda_j=-1$. 

For any purely imaginary number $c$, we can consider the Hilbert space $\cH = L^2(\PP^{m-1}_{\CC}, |\cK|^{1+c})$, whose elements we view as complex-valued functions $\psi(y_1,\dots,y_m)$ on $\CC^m\backslash\{0\}$, homogeneous of degree $-m(1+c)$. They may also be viewed as functions $\psi(y_0,y_1,\dots,y_m)$ which are both translation-invariant and homogeneous of degree $-m(1+c)$, where geometrically $y_i$ parametrize the quasiparabolic lines above $t_i$; this interpretation has more symmetry and makes formulas nicer.

The \emph{twisted Hecke operators} $H_x^{\lambda}$ introduced in~\cite{EFK4} are given by
\begin{equation}
\label{twisted-hecke}
	(H_x^{\lambda}\psi)(y_0,\dots,y_m) =  \paren{\prod_{i=0}^m |t_i-x|^{-\lambda_i}}\cdot\int_{\CC} \psi\paren{\frac{t_0-x}{s-y_0},\cdots,\frac{t_m-x}{s-y_m}}\prod_{i=0}^m |s-y_i|^{2\lambda_i}dsd\bar{s}.
\end{equation}
It is easy to check that $H_x^{\lambda}$ is a linear map which maps functions homogeneous of degree $-\lambda_{m+1}+\sum_{i=0}^m \lambda_i$ to functions homogeneous of degree $2+\sum_{j=0}^{m+1}\lambda_j$. For simplicity, we will limit ourselves to the case when $\lambda_{m+1} = 0$ and $\sum_{i=0}^m \lambda_{i} = 0$, so that functions in the domain and codomain of $H_x^{\lambda}$ have the same homogeneity degree $-m$. 

If we omit the constant term $\prod |t_i-x|^{-\lambda_i}$ in \cref{twisted-hecke}, the formula gives so-called \emph{modified} Hecke operators, denoted by $\HH_x^{\lambda}$.

\subsection{Taking the limit}
\label{SecLimitAndDefinition}
Suppose we wish to merge points $t_0,\dots,t_n$, where $n\le m$. For simplicity, we let the other points remain distinct, but one can merge more than one group of points by the same procedure. Let $a_1,\ldots,a_n$ be a sequence of imaginary numbers such that $a_n$ is nonzero. For $0\le i\le n$, $1\le j\le n$ define
\begin{equation}
\label{twisting-param}
\lambda_i^{(j)} = \frac{a_j}{\prod\limits_{\substack{0\le k\le j \\ k\neq i}} (t_i-t_k)},
\end{equation}
when $i\leq j$ and zero else. 

Let $\lambda_i=-1+\sum_j \lambda_i^{(j)}$ when $i\leq n$ and $-1$ when $i>n$. In the limiting process, we will make $t_i-t_{i-1}$ ($1\le i\le n$) all equal, real numbers $\delta$, as we take the limit $\delta\to 0$.



Make a change of variables
\begin{equation}
\label{reparam}
u_i = \sum_{0\le j\le i} \frac{y_j}{\prod\limits_{\substack{0\le k \le i \\ k \neq j}} (t_j-t_k)}.
\end{equation}
In fact, define variables $u_{i,j}$, $0\le j\le i\le n$ recursively, as follows: $u_{i,0} = y_i$, $u_{i,j} = \frac{u_{i, j-1} - u_{i-1,j-1}}{t_i - t_{i-j}}$. Then it is easy to see $u_i=u_{i,i}$. We also let $u_i = y_i$ for $n+1\le i\le m$ for simplicity. Note that now an element $\psi = \psi(u_0,\dots,u_m)\in \cH$ will still be homogeneous of degree $-m$, but translation invariant only in the variables $u_0,u_{n+1},\dots,u_m$ while $u_1,\dots,u_n$ remain fixed.

\begin{prop}
\label{formula}
The strong limit $\HH_x$ of the modified Hecke operator after coordinate change $U_{\eps}\HH_x^{\lambda}U_{\eps}^{-1}$, as $\delta\to 0$, is given by
\begin{align*}
&(\HH_x \psi)(u_0,\dots,u_m) \\
=&\ \int_{\CC} \psi\paren{\frac{t_0-x}{s-u_0}, \partial\paren{\frac{t_0-x}{s-u_0}}, \dots, \frac{1}{n!} \partial^{n}\paren{\frac{t_0-x}{s-u_0}}, \frac{t_{n+1}-x}{s-u_{n+1}},\dots}\frac{\exp(i\Re D\log(s-u_0))dsd\bar{s}}{|s-u_0|^{2n+2}\prod_{k=n+1}^m |s-u_k|^2}.
\end{align*}
Here $U_{\eps}$ is the unitary operator corresponding to the coordinate change $(y_0,\ldots,y_n)\mapsto (u_0,\ldots,u_n)$.
\end{prop}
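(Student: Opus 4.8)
The plan is to compute the limit directly from the explicit integral formula \eqref{twisted-hecke} for the modified Hecke operator $\HH_x^\lambda$, track what the coordinate change $U_\eps$ does to it, and then take $\delta\to 0$ term by term. First I would write out $U_\eps\HH_x^\lambda U_\eps^{-1}\psi$: since $U_\eps$ is the (unitary, because the substitution \eqref{reparam} is linear with constant Jacobian in $\delta$) operator implementing $(y_0,\dots,y_n)\mapsto(u_0,\dots,u_n)$, conjugating $\HH_x^\lambda$ by it just means we express everything in the $u$-variables. Concretely, $(U_\eps\HH_x^\lambda U_\eps^{-1}\psi)(u_0,\dots,u_m)$ equals the integral over $s$ of $\psi$ evaluated at the image under the same substitution of the point $\bigl(\tfrac{t_0-x}{s-y_0},\dots,\tfrac{t_n-x}{s-y_n},\dots\bigr)$, against the weight $\prod_{i=0}^n|s-y_i|^{2\lambda_i}\prod_{k>n}|s-y_k|^2{}^{\!-1}\cdots$, where the $y_i$ for $i\le n$ are $t_i = t_0 + (\text{something})\delta$ and must themselves be rewritten via the inverse of \eqref{reparam}. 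The key observation, which is exactly \Cref{identities}(a) with $\eps$ replaced by a formal parameter of size $\delta$, is that applying the substitution \eqref{reparam} (equivalently the divided-difference recursion $u_{i,j}=\frac{u_{i,j-1}-u_{i-1,j-1}}{t_i-t_{i-j}}$) to the tuple $\bigl(\tfrac{t_0-x}{s-y_0},\dots,\tfrac{t_n-x}{s-y_n}\bigr)$ produces, in the limit $\delta\to 0$ where $u_{i,j}$ becomes $\tfrac1{j!}\partial^j u_0$, the tuple $\bigl(\tfrac{t_0-x}{s-u_0},\partial(\tfrac{t_0-x}{s-u_0}),\dots,\tfrac1{n!}\partial^n(\tfrac{t_0-x}{s-u_0})\bigr)$. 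This is because divided differences converge to derivatives, and the Taylor-series homomorphism property $T(X_1X_2)=T(X_1)T(X_2)$ of the proof of \Cref{identities} makes the divided-difference table of $\tfrac{t_0-x}{s-y_i}$ converge to the derivative table of $\tfrac{t_0-x}{s-u_0}$.

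Next I would handle the weight. The product $\prod_{i=0}^n|s-y_i|^{2\lambda_i}$ with $\lambda_i=-1+\sum_j\lambda_i^{(j)}$ splits as $\prod_{i=0}^n|s-y_i|^{-2}$ times $\prod_{i=0}^n|s-y_i|^{2\sum_j\lambda_i^{(j)}} = \exp\bigl(2\Re\sum_i(\sum_j\lambda_i^{(j)})\log(s-y_i)\bigr)$. As $\delta\to0$, $\prod_{i=0}^n|s-y_i|^{-2}\to|s-u_0|^{-2n-2}$, which is the denominator in the claimed formula. For the exponential factor, the exponent is $2\Re\sum_{i,j}\lambda_i^{(j)}\log(s-y_i)$; substituting \eqref{twisting-param} for $\lambda_i^{(j)}$ turns the sum $\sum_i \lambda_i^{(j)}\log(s-y_i)$ into a finite-difference expression which, as $\delta\to0$, converges to $\sum_j \frac{a_j}{j!}\partial^j\log(s-u_0)$ — a Newton-forward-difference computation, essentially the same combinatorics as \eqref{twisting-param} is designed to produce. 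Taking real parts and recalling $D=-2i\sum_j\frac{a_j}{j!}\partial^j$, this is exactly $i\Re D\log(s-u_0)$ (the factor of $2$ and the $i$ are absorbed into the normalization of $D$; one checks signs and the role of $a_n\ne0$ here). Finally the prefactor $\prod_{i=0}^m|t_i-x|^{-\lambda_i}$ was dropped in passing to the modified operator, so it plays no role. Assembling these three limits gives the displayed formula for $\HH_x$.

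The remaining and genuinely technical point is to upgrade pointwise/formal convergence of the integrand to \emph{strong} convergence of the operators, i.e.\ $\|(U_\eps\HH_x^\lambda U_\eps^{-1}-\HH_x)\psi\|_{\cH}\to0$ for each $\psi\in\cH$. I would do this by a dominated-convergence argument: establish a $\delta$-uniform integrable (in $s$, locally uniformly in $u$) bound on the integrand, using that the twisting exponents $\lambda_i^{(j)}$ and hence the extra weight $\exp(2\Re\sum\lambda_i^{(j)}\log(s-y_i))$ stay bounded on compacta away from $s=y_i$ while remaining integrable near the (colliding) singularities because $\Re\lambda_i=-1$ exactly kills the local $L^1$ obstruction in the same way as in \cite{EFK3,EFK4}, together with the homogeneity degree $-m$ of $\psi$ controlling the behavior as $s\to\infty$. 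One must also check that the singular set $\{s=u_0\}$ of multiplicity $n+1$ is still only logarithmically/borderline singular — i.e.\ $|s-u_0|^{-2n-2}$ against the $2n$ extra factors of $|s-u_0|$ coming from the arguments $\partial^j(\tfrac{t_0-x}{s-u_0})$ inside $\psi$ and the half-density homogeneity — so that the limiting integral converges. I expect this domination/uniform-integrability step across the collision to be the main obstacle; the algebraic identification of the limit is essentially forced by \Cref{identities} and the definition \eqref{twisting-param}. It may be cleanest to first prove convergence on a dense subspace of smooth compactly-supported-away-from-diagonals half-densities and then extend by the uniform operator bounds (which are established in \Cref{SecBounded}), so that only a density argument plus an $\eps/3$-estimate is needed rather than a single global dominating function.
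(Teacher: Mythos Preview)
Your proposal is correct and follows essentially the same route as the paper: reduce to a dense subspace (continuous, compactly supported modulo translations and dilations), compute the pointwise limit of the integrand (arguments of $\psi$ via divided differences $\to$ derivatives, weight via the Newton-difference structure of the $\lambda_i^{(j)}$), pass to the limit under the integral by dominated convergence, and then extend to all of $\cH$ using the uniform operator bounds. The paper carries out the two pointwise limits by short inductions on $j$ and $n$ rather than invoking \cref{identities} directly, but this is the same computation in different clothing.

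The one place the paper is sharper than your sketch is the dominating function. You anticipate this as ``the main obstacle'' and propose a local analysis near the colliding singularities; the paper instead observes that, because $H_x=\int_\CC U_{s,x}\,d\nu(s)$ with $U_{s,x}$ unitary and $d\nu(s)=\bigl|\tfrac{x(x-1)}{s(s-1)(s-x)}\bigr|\,ds\,d\bar s$ (this holds uniformly along the limit, cf.\ \cite{EFK4} and \cref{SecUnitaryRep}), the integrand $f_\lambda(s)$ is bounded pointwise by $g(s)=M/|s(s-1)(s-x)|$ with $M=\sup|\psi|$ on the slice $u_m=0$, $u_{m-1}=1$. This single integrable majorant works for all $\delta$ and sidesteps any delicate analysis of the collision; you may want to adopt it.
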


\begin{rem}
We can collide several clusters of points, doing the same procedure as above for each collided point, and the proof is the same. If we leave three points unglued, then we still have a similar strong limit.
\end{rem}

\begin{proof}
Modified Hecke operators are uniformly bounded, the operator $\HH_x$ is also bounded (this does not create circular reasoning since we do not use the strong limit in this paper.) Hence it is enough to prove strong convergence on a dense subset of $\cH$. Let $\psi$ be a continuous function with compact support modulo translations and dilations. We will show that for $s\in \C$ the limit of 
\[f_{\lambda}(s)=\psi\paren{\frac{t_0-x}{s-y_0},\cdots,\frac{t_m-x}{s-y_m}}\prod_{i=0}^m |s-y_i|^{2\lambda_i}\] is
\[f(s)=\psi\paren{\frac{t_0-x}{s-u_0}, \partial\paren{\frac{t_0-x}{s-u_0}}, \dots, \frac{1}{n!} \partial^{n}\paren{\frac{t_0-x}{s-u_0}}, \frac{t_{n+1}-x}{s-u_{n+1}},\dots}\frac{\exp(i\Re D\log(s-u_0))dsd\bar{s}}{|s-u_0|^{2n+2}\prod_{k=n+1}^m |s-u_k|^2}\] and similarly for several collided points.

Using the results of~\cite{EFK4} and \cref{SecUnitaryRep} we see that the sequence of functions $f_{\lambda}(s)$ and $f(s)$ satisfies the conditions of dominated convergence theorem with $g(s)=\frac{M}{\abs{s(s-1)(s-x)}}$, where $M$ is the maximum value of $\abs{\psi}$ on the hyperplane $u_{m}=0$, $u_{m-1}=1$. It follows that $\HH^{\lambda}\psi$ tends to $\HH\psi$ as $\delta$ tends to zero.

Let $j>0$. Let us show that the limit of the term $|s-y_0|^{2\lambda_0^{(j)}}\cdots |s-y_n|^{2\lambda_j^{(j)}}$ is \[\exp(\frac{2a_j}{j!} \Re\partial^{j}\log(s-u_0)).\]
Use induction on $j$. The base case $j=0$ is clear. To reduce clutter, write $\lambda_i^{(j)}=\lambda_i$ and $j=n$ below.
In general, we have for $0<i<n$, 
\[\lambda_i = \frac{a}{\prod_{0\le k\neq i\le n} (t_i - t_k)} = \frac{1}{t_n-t_0}\paren{\frac{a}{\prod_{0<k\neq i\le n} (t_i-t_k)} - \frac{a}{\prod_{0\le k\neq i< n} (t_i-t_k)}},\]
so
\begin{equation}
\label{ratio}
\prod_{i=0}^n \abs{s-y_i}^{2\lambda_i} = \paren{\frac{\prod_{i=1}^n |s-y_i|^{2\lambda_{i, [1, n]}}}{\prod_{i=0}^{n-1} |s-y_i|^{2\lambda_{i, [0,n-1]}}}}^{\frac{1}{t_n-t_0}},
\end{equation}
where $\lambda_{i,[0,n-1]} = \frac{a}{\prod_{0\le k\neq i \le n-1} (t_i-t_k)}$ and $\lambda_{i,[1, n]} = \frac{a}{\prod_{1\le k\neq i \le n} (t_i-t_k)}$. By the induction hypothesis, the limit of the RHS of \cref{ratio} as $\delta\to 0$ is
\[\lim_{\delta\to 0} \exp\paren{\frac{2a}{(n-1)!}\Re\partial^{n-1}\frac{1}{n\delta}(\log(s-u_{1,0})-\log(s-u_{0,0}))} = \exp\paren{\frac{2a}{n!}\Re\partial^n\log(s-u_0)},\]
by using $u_{1,0} = u_{0,0} + \delta u_{1,1}$.

Let us also consider the terms $\frac{t_i-x}{s-u_i}$. Use induction on $n$ again. The base case $n=0$ is clear. The induction step is given by
\[\lim_{\delta\to 0}\frac{1}{(n-1)!} \partial^{n-1}\frac{1}{n\delta}\paren{\frac{t_1-x}{s-u_{1,0}}- \frac{t_0-x}{s-u_{0,0}}} = \frac{1}{n!}\partial^n\paren{\frac{t_0-x}{s-u_0}},\]
where we used $t_1=t_0+\delta$ and $u_{1,0} = u_{0,0} + \delta u_{1,1}$.

Note that this proof works for the several clusters of points: all coordinate changes and limits are done independently.
\end{proof}

\begin{exm}
    Let $n=2$. Then we have
\begin{align*}
\HH_x\psi(u_0,u_1,\ldots,u_m)=&\int_{\C}\psi\big(\frac{t_0-x}{s-u_0},\frac{1}{s-u_0}+u_1\frac{t_0-x}{(s-u_0)^2},\frac{t_2-x}{s-u_2},\cdots\big)\\
&\quad\quad \cdot \frac{\exp(2a_1\Re\frac{u_1}{s-u_0})ds\ovl{ds}}{\abs{s-u_0}^4\abs{s-u_2}^2\cdots \abs{s-u_m}^2}.
\end{align*}
\end{exm}
Motivated by \cref{formula} and \cref{identities} we define Hecke operators for several (possibly) glued points as follows. First, we change the notation: all points $t_0,\ldots,t_m$ can be glued. To each point $t_i$ with multiplicity $n_i+1$ corresponds its own set of variables $u_i^{(j)}$, where $i=0,\ldots,m$ and $0\leq j\leq n_i$. Let $\C[\eps_i]=\C[x]/(x^{n_i+1})$. Define $\bu_i=\sum u_i^{(j)}\eps_i^j$. We also have characters $\chi_0,\ldots,\chi_m$. Let $\chi_i(\sum\eps_i^j b_j)=i\Re\sum c_j^{(i)}b_j$. We assume that $\sum c_j^{(0)}=0$ for simplicity.

\begin{defn}
Let $\HH_x$ be the operator on $\cH$ defined by
    \begin{align*}
        \HH_x\psi(\bu_0,\ldots,\bu_m)=\int_{\C}\psi\bigg(\frac{t_0+\eps_0-x}{s-\bu_0},\ldots,\frac{t_m+\eps_m-x}{s-\bu_m}\bigg)\frac{\exp\bigg(\sum\chi_i\big(\log(s-\bu_i)\big)\bigg)ds\ovl{ds}}{\prod_{i=0}^m \abs{s-u_i^{(0)}}^{2n_i+2}}
    \end{align*}
    Let \[H_x = \prod_{i=0}^{m} |t_i-x|\exp\bigg(\tfrac12\chi_i\big(\log(t_i+\eps_i-x)\big)\bigg) \HH_x.\]
\end{defn}

 Note that \[\log(t_i+\eps_i-x)=\log(t_i-x)+\sum \eps_i^j \frac{(x-t_0)^{-j}}{j},\] hence \[\exp\bigg(\tfrac12\chi_i\big(\log(t_i+\eps_i-x)\big)\bigg)=\exp\big(i\Re c_0\log(t_i-x)+i\Re\sum \tfrac1j c^{(i)}_j (x-t_i)^{-j}\big).\] 

\subsection{The unitary representation}
\label{SecUnitaryRep}
Suppose that we have $k$ possibly glued points $t_0,\ldots,t_k$ and two unglued points $t_{k+1}, t_{k+2}$. We expect that the statements and proofs of this section  and the sections below can be modified to allow all non-infinite points to be glued. 

We will now show that $H_x = \int_{\CC} U_{s,x} d\nu(s)$, where $U_{s,x}$ are certain unitary operators on $\cH = L^2(\PP^{m-1}_{\CC})$ and $\nu$ is the same measure on $\C$ as in~\cite{EFK3,EFK4}.

 Set the unglued points $u_{k+1}=t_{k+1}=0$ and $u_{k+2}=t_{k+2} = 1$. A short computation gives that the resulting modified Hecke operator is
\begin{align*}
&(\HH_x \psi)(\bu_0,\dots,\bu_{k}) = \\
&\int_{\CC} \psi\paren{\frac{s(s-1)}{s-x}\paren{\frac{x}{s}+\frac{t_0+\eps_0-x}{s-\bu_0}, \cdots, \frac{x}{s}+\frac{t_k+\eps_k-x}{s-\bu_k}}} \\
&\ \ \ \ \ \ \ \cdot \frac{|s(s-1)|^{m-2} \exp\bigg(\sum\chi_i\big(\log(s-\bu_i)\big)\bigg)dsd\bar{s}}{|s-x|^m\prod_{l=0}^{k}|s-u_l^{(0)}|^{2n_l+2}},
\end{align*}
where $D_l$ is defined similarly to $D$.

The unitary operators $U_{s,x}$ will be given by the action of a group element 
\[g_{s, x} = (g_{s,x,0}, g_{s, x, n+1},\dots,g_{s, x, k}) \in \PGL_2(\CC[\eps_0]) \times \PGL_2(\CC[\eps_1])\times\cdots\times\PGL_2(\CC[\eps_k]).\]
The action of $\PGL_2(\CC[\eps_i])$ on the coordinates $u_i^{(0)},\dots,u_i^{(n_i)}$ is described in \cref{SecRepresentationsOfGEps}.

Then, using \cref{identities}, it is easy to check the following:

\begin{prop}
\label{unitary}
	We have $H_x = \int_{\CC} U_{s,x}d\nu(s)$, where $U_{s,x}$ is the unitary operator given by the action of the group element $g_{s,x} = (g_{s,x,0}, g_{s, x, 1},\dots,g_{s, x, k})$, where
	\[g_{s, x, i} = \begin{pmatrix}
    -(s-1)x & (t_i+\eps_i)s(s-1) \\
    -(s-x) & s(s-x)
\end{pmatrix},\] and $\nu(s) = |\frac{x(x-1)}{s(s-1)(s-x)}| dsd\bar{s}$. 
\qed
\end{prop}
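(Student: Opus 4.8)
The plan is to prove the identity by a direct verification: unwind the definition of the unitary representation $\rho$ from \cref{SecRepresentationsOfGEps} applied to the group element $g_{s,x}=(g_{s,x,0},\dots,g_{s,x,k})$, multiply by $d\nu(s)$, and check that the resulting integrand agrees term by term with the integrand of $H_x=\prod_{i=0}^{m}|t_i-x|\exp\bigl(\tfrac12\chi_i(\log(t_i+\eps_i-x))\bigr)\HH_x$ in the $0,1$-normalized form written just above the proposition. Since $\HH_x$ is already in that form, the entire content is matching the three kinds of data attached to each $g_{s,x,i}$: the M\"obius action on $\bu_i$, the modulus automorphy factor, and the $\chi_i$-exponential.

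First, for each $i$ one records the two elementary facts
\[
\det g_{s,x,i}=s(s-1)(s-x)(t_i+\eps_i-x),\qquad c_i\bu_i+d_i=(s-x)(s-\bu_i),
\]
from which a one-line manipulation yields the M\"obius image
\[
g_{s,x,i}\,\bu_i=\frac{(s-1)\bigl(s(t_i+\eps_i)-x\bu_i\bigr)}{(s-x)(s-\bu_i)}=\frac{s(s-1)}{s-x}\Bigl(\frac{x}{s}+\frac{t_i+\eps_i-x}{s-\bu_i}\Bigr),
\]
which is exactly the $i$-th argument of $\psi$ in the rewritten $\HH_x$; that the $\eps_i$-truncation of this rational expression behaves as written is \cref{identities}(a), and for the two unglued clusters one checks directly that $g_{s,x}$ carries the fixed points to the values $0$ and $1$. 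Next, the modulus factor $\bigl|\det g_{0,i}/(c_{0,i}u_i^{(0)}+d_{0,i})^2\bigr|^{n_i+1}$ equals $\bigl|s(s-1)(t_i-x)/\bigl((s-x)(s-u_i^{(0)})^2\bigr)\bigr|^{n_i+1}$, which supplies the denominator $|s-u_i^{(0)}|^{-2n_i-2}$ of $\HH_x$ together with scalar powers of $|s|,|s-1|,|s-x|$ and a factor $|t_i-x|^{n_i+1}$; taking the product over $i=0,\dots,k$, using that the glued multiplicities sum to $m-1$ and that $|x(x-1)|=|t_{k+1}-x|\,|t_{k+2}-x|$, and combining with $d\nu(s)=|x(x-1)/(s(s-1)(s-x))|\,dsd\bar s$, one recovers precisely the scalar weight $|s(s-1)|^{m-2}|s-x|^{-m}$ of $\HH_x$ and the full prefactor $\prod_{i=0}^{m}|t_i-x|$. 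Finally, since $\chi_i$ only sees the $\eps_i$-part, the cocycle term $\exp\bigl(\chi_i(\log(c_i\bu_i+d_i)-\tfrac12\log\det g_i)\bigr)$ splits as $\exp(\chi_i(\log(s-\bu_i)))$ --- the factor occurring in $\HH_x$, identified via \cref{identities}(b) --- times an $\eps_i$-constant phase assembled from $\log s,\log(s-1),\log(s-x)$ and $\log(t_i+\eps_i-x)$; one checks that these residual phases recombine with the scaling behaviour of $\psi$ and the remaining $\exp(\tfrac12\chi_i\log(t_i+\eps_i-x))$ in the prefactor of $H_x$ to a trivial overall contribution. Unitarity of $U_{s,x}$ is then immediate: it is the external tensor product of the $\rho(g_{s,x,i})$, each of which is unitary by \cref{SecRepresentationsOfGEps}.

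The main obstacle is not conceptual --- the choice of $g_{s,x,i}$ already builds in the Hecke modification at $(x,s)$ together with the twisting normalization --- but the careful bookkeeping of the scalar and phase factors: matching the powers of $|s|,|s-1|,|s-x|$ requires pinning down the number of marked points counted with multiplicity and the normalization of $d\nu$, and matching the $\chi_i$-phases requires simultaneously tracking the $-\tfrac12\log\det g_i$ term in the cocycle, the phase acquired by $\psi$ under the affine rescaling by $s(s-1)/(s-x)$, and the prefactor relating $H_x$ to $\HH_x$. Keeping the conventions for the fixed points $0,1,\infty$ consistent throughout is the one place where care is genuinely needed.
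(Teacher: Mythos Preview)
Your proposal is correct and follows exactly the approach the paper intends: the paper's own proof is nothing more than ``using \cref{identities}, it is easy to check,'' and you have supplied the verification it leaves implicit --- computing $\det g_{s,x,i}$, $c_i\bu_i+d_i$, the M\"obius image, the modulus automorphy factor, and the $\chi_i$-cocycle, and matching each against the integrand of $\HH_x$ and the prefactor of $H_x$.

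One small imprecision worth tightening: you call the residual phase ``$\eps_i$-constant,'' but $\log(t_i+\eps_i-x)$ is not constant in $\eps_i$; what you mean (and what you then use correctly) is that it is independent of $s$ and of $\bu_i$, so it pulls out of the integral and cancels against the $\exp\bigl(\tfrac12\chi_i\log(t_i+\eps_i-x)\bigr)$ in the prefactor of $H_x$. The genuinely $\eps_i$-constant terms $\chi_i(\tfrac12\log(s-x)-\tfrac12\log s-\tfrac12\log(s-1))$ contribute $i\Re\,c_0^{(i)}(\cdots)$, and their product over $i$ is trivial precisely by the paper's standing assumption that the constant-term parameters sum to zero; this is the concrete mechanism behind your phrase ``recombine to a trivial overall contribution.''
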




\subsection{Boundedness}
\label{SecBounded}
Initially, the Hecke operators are only partially defined.
Let $V\subset \cH$ be the (dense) subset of \emph{continuous} functions $\psi$, translation-invariant and homogeneous of degree $-m$. Let $U\subset \CC^{m+1}$ be the subset of points where no two coordinates are equal to each other. 

\begin{prop}
\label{PropHxWellDefinedAndBounded}
	For $\psi\in V$, the integral $(\HH_x\psi)(u_0,\dots,u_m)$ converges and is continuous on $U$, and can be extended to an element of $\cH$.
\end{prop}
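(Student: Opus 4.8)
The plan is to establish the two claims—convergence/continuity on $U$, and extension to $\cH$—separately, the first by a direct estimate on the integrand and the second by invoking the unitary-integral representation of \cref{unitary}.

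\medskip

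\textbf{Convergence and continuity on $U$.} First I would fix $(u_0,\dots,u_m)\in U$ and analyze the integrand of $\HH_x\psi$ as a function of $s$. The only possible sources of divergence are (i) the poles at $s=u_i^{(0)}$ coming from the factor $\prod_i|s-u_i^{(0)}|^{-2n_i-2}$ together with the arguments $\frac{t_i+\eps_i-x}{s-\bu_i}$ blowing up, and (ii) the behavior as $s\to\infty$. For (i): near $s=u_i^{(0)}$, after the change of variables induced by $g_{s,x,i}\in\PGL_2(\C[\eps_i])$ (equivalently, using part (a) of \cref{identities} to rewrite $\frac{t_i+\eps_i-x}{s-\bu_i}$ as a genuine $\C[\eps_i]$-point), the corresponding block of the argument of $\psi$ tends to a finite limit (the "point at $t_i$"), while the half-density weight $|s-u_i^{(0)}|^{-2n_i-2}$ is exactly cancelled by the Jacobian of that $\PGL_2(\C[\eps_i])$-action—this is precisely the content of the identity used to derive the unitary form in \cref{SecUnitaryRep}. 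Hence, using that $\psi$ is continuous, bounded on the compact slice $\{u_m^{(0)}=0,\ u_{m-1}^{(0)}=1\}$ (say) and homogeneous, the integrand is bounded near each $s=u_i^{(0)}$ by a constant. For (ii), homogeneity of $\psi$ of degree $-m$ together with the count of $|s|$-powers in the measure shows the integrand decays like $|s|^{-4}$ (matching the $\frac{M}{|s(s-1)(s-x)|}$ bound already used in the proof of \cref{formula}), which is integrable at infinity in $\C$. Combining, $|$integrand$|\le g(s)$ for an $L^1(\C)$ function $g$ locally uniform in the $u$'s on $U$, so the integral converges; continuity on $U$ then follows from dominated convergence, since the integrand depends continuously on $(u_0,\dots,u_m)\in U$ for a.e.\ $s$ and the dominating function can be chosen locally uniform.

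\medskip

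\textbf{Extension to $\cH$.} For this I would not try to estimate the integral directly but instead use \cref{unitary}: on $V$ (or at least on the dense subset of continuous compactly-supported-mod-scaling functions, as in the proof of \cref{formula}), $H_x\psi=\int_\C (U_{s,x}\psi)\,d\nu(s)$ with $U_{s,x}$ unitary and $\nu$ the finite measure $|\tfrac{x(x-1)}{s(s-1)(s-x)}|\,ds\,d\bar s$ on $\C$. Since $\nu(\C)<\infty$ and $\|U_{s,x}\psi\|_{\cH}=\|\psi\|_{\cH}$, Minkowski's integral inequality gives $\|H_x\psi\|_{\cH}\le \nu(\C)\,\|\psi\|_{\cH}$, so $H_x$—and hence $\HH_x$, which differs by an explicit bounded nonvanishing function of $x$—is bounded on the dense subspace $V$ and therefore extends (uniquely, by continuity) to a bounded operator on all of $\cH$. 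One should check that this extension agrees with the pointwise integral formula on $V$, which is immediate from the convergence established in the first part.

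\medskip

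\textbf{Main obstacle.} The delicate point is the local analysis at $s=u_i^{(0)}$ in the non-reduced (wild) case: one must verify that the pole of order $n_i+1$ in $|s-u_i^{(0)}|^{-2n_i-2}$ is exactly absorbed, simultaneously with the divergence of the derivatives $\tfrac{1}{j!}\partial^j(\tfrac{t_i+\eps_i-x}{s-\bu_i})$ for $1\le j\le n_i$, when one passes to the $\C[\eps_i]$-coordinates via \cref{identities}(a). Making this cancellation precise—ideally by literally rewriting the integrand as $(U_{s,x}\psi)(u)\cdot(\text{measure})$ using \cref{unitary} and reading off boundedness in $s$ from unitarity of $U_{s,x}$ on the half-density space—is where the real work lies; once that is in hand, convergence, continuity on $U$, and the bound are routine applications of dominated convergence and Minkowski's inequality.
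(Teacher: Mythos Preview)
Your overall plan—pointwise estimate for convergence and continuity, then the unitary-integral form of \cref{unitary} plus Minkowski for the $L^2$-bound—is exactly the paper's, and your second half (extension to $\cH$) is correct as written.

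The gap is in the local analysis at $s=u_i^{(0)}$. You assert that ``the corresponding block of the argument of $\psi$ tends to a finite limit''; in fact the $j$-th coordinate of that block is $\tfrac{1}{j!}\partial^j\big(\tfrac{t_i-x}{s-u_i^{(0)}}\big)$, which blows up like $|s-u_i^{(0)}|^{-(j+1)}$. There is no Jacobian to invoke: we are integrating over $s$ with $u$ fixed, not changing variables in $u$. Nor does unitarity of $U_{s,x}$ help—unitarity is an $L^2$ statement and gives no pointwise control of $(U_{s,x}\psi)(u)$ for fixed $u$; indeed the explicit half-density factor in $\rho(g_{s,x,i})$ carries a pole $|s-u_i^{(0)}|^{-2(n_i+1)}$. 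Your appeal to the dominating function $\tfrac{M}{|s(s-1)(s-x)|}$ from the proof of \cref{formula} also fails, because there $\psi$ was taken compactly supported (hence bounded by $M$), while here $\psi\in V$ is only continuous and the slice $\{u_m^{(0)}=0,\ u_{m-1}^{(0)}=1\}$ is not compact.

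What the paper actually does is use the degree $-m$ homogeneity of $\psi$ directly:
\[
\psi(\text{args}) \;=\; |s-u_i^{(0)}|^{(n_i+1)m}\,\psi\!\big((s-u_i^{(0)})^{n_i+1}\cdot\text{args}\big).
\]
After this rescaling, the worst coordinate of the $i$-th block (of order $(s-u_i^{(0)})^{-(n_i+1)}$) becomes $O(1)$ and every other block tends to $0$, so the rescaled argument converges to a specific finite point at which continuity of $\psi$ applies. The net power of $|s-u_i^{(0)}|$ in the integrand is then $(n_i+1)m-2(n_i+1)=(n_i+1)(m-2)\ge 0$, hence locally integrable near $s=u_i^{(0)}$. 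Replace your Jacobian/unitarity heuristic with this rescaling and the rest of your argument goes through.
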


\begin{proof}
	We have to first show the integral converges, i.e.~to check the behavior of the formula in \cref{formula} at $s=u_0^{(0)}, u_{1}^{(0)},\dots,u_{k}^{(0)},u_{k+1},u_{k+2},\infty$. Let us use translation invariance to set the last coordinate $u_{k+2}=0$, and also without loss of generality set $t_{k+2}=0$. We obtain
	\begin{align*}
		\HH_x\psi(u_0^{(0)},\dots,u_{k+1}) &= \int_{\CC}\psi\paren{\frac{t_0s-xu_0^{(0)}}{s-u_0^{(0)}}, \partial_0\paren{\frac{t_0-x}{s-u_0^{(0)}}},\dots, \frac{1}{n!}\partial_0^{n_0}\paren{\frac{t_0-x}{s-u_0^{(0)}}}, \frac{t_{1}s-xu_{1}^{(0)}}{s-u_{1}^{(0)}}, \dots} \\
		&\ \ \ \ \ \ \ \ \ \ \cdot \frac{|s|^{m-2}\exp\bigg(\sum\chi_i\big(\log(s-\bu_i)\big)\bigg)dsd\bar{s}}{\prod_{l=0}^{k+1} |s-u_l^{(0)}|^{2n_l}},
	\end{align*}
 where $m+1$ is the total multiplicity of all non-infinite points.
    From this, it is clear that as $s\to\infty$, $\HH_x\psi(u_0,\dots,u_{m-1})$ decays as $|s|^{-m-2}$, hence integrable. To check the behavior as $s\to u_0^{(0)}$, we use homogeneity and scale all arguments up by\linebreak $(s-u_0^{(0)})^{n_0+1}$; then there will be an additional $|s-u_0^{(0)}|^{(n_0+1)m}$ term in the measure, so that as $s\to u_0^{(0)}$ the integral behaves as $|s-u_0^{(0)}|^{(n_0+1)m - (2n_0+2)}$ which is also integrable. A similar calculation addresses the behaviors at the other points.
	
	Continuity of $\HH_x\psi$ in $U$ follows from continuity of $\psi$. Finally, $\HH_x\psi$ is $L^2$-integrable by Cauchy-Schwarz and the fact that $\norm{H_x} \le \int_{\CC} 
	|\frac{x(x-1)}{s(s-1)(s-x)}| dsd\bar{s} < \infty$, which is a consequence of \cref{unitary}.
\end{proof}

\begin{prop}
	The Hecke operators $H_x$ extend to bounded, self-adjoint, mutually commuting operators on $\cH$, for $x\neq t_i,\infty$.
\end{prop}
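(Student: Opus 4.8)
The plan is to read off all three properties from the integral representation $H_x=\int_{\CC}U_{s,x}\,d\nu(s)$ of \cref{unitary}, used in the normalization of \cref{SecUnitaryRep} (the unglued points other than $\infty$ placed at $0$ and $1$), so that $\nu(s)=\bigl|\tfrac{x(x-1)}{s(s-1)(s-x)}\bigr|\,ds\,d\bar s$ is a finite positive measure on $\CC$ whenever $x\neq t_i,\infty$. Each $U_{s,x}$ is a genuine unitary operator — here one uses that $\Im c^{(i)}_0\in\ZZ$, so that $\rho$ is an honest unitary representation of $\prod_i\PGL_2(\CC[\eps_i])$ — and $s\mapsto U_{s,x}\psi$ is strongly continuous on $\CC\setminus\{0,1,x\}$ since the matrices $g_{s,x,i}$ are rational in $s$; hence the (Bochner) integral is well defined and $|\langle H_x\psi,\phi\rangle|\le\nu(\CC)\,\norm{\psi}\,\norm{\phi}$. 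Together with \cref{PropHxWellDefinedAndBounded} this settles boundedness: $H_x$ extends from the dense subspace $V$ to an operator of norm $\le\nu(\CC)$ on $\cH$, and it agrees with $\int_{\CC}U_{s,x}\,d\nu(s)$ on all of $\cH$ by density.

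For self-adjointness I would exhibit an involution $\iota_x$ of $\PP^1$ with $U_{s,x}^{*}=U_{\iota_x(s),x}$ that preserves $\nu$. Inverting $g_{s,x,i}$ in $\PGL_2(\CC[\eps_i])$ gives
\[
g_{s,x,i}^{-1}=\begin{pmatrix} s(s-x) & -(t_i+\eps_i)s(s-1)\\ s-x & -(s-1)x\end{pmatrix},
\]
and comparing matrices entry by entry one checks that, uniformly in $i$, this equals $g_{\iota_x(s),x,i}$ for the Möbius involution $\iota_x(s)=\tfrac{x(s-1)}{s-x}$, which permutes $\{0,1,x,\infty\}$ as $(0\,1)(x\,\infty)$; a short Jacobian computation then gives $\iota_x^{*}\nu=\nu$ (and the point $x$, where $\iota_x$ has its pole, is $\nu$-null, so the substitution $s\mapsto\iota_x(s)$ is legitimate). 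Since $\rho$ is a right unitary representation, $U_{s,x}^{*}=U_{s,x}^{-1}=\rho(g_{s,x}^{-1})=\rho(g_{\iota_x(s),x})=U_{\iota_x(s),x}$, so
\[
\langle H_x^{*}\phi,\psi\rangle=\int_{\CC}\langle U_{\iota_x(s),x}\phi,\psi\rangle\,d\nu(s)=\int_{\CC}\langle U_{s,x}\phi,\psi\rangle\,d\nu(s)=\langle H_x\phi,\psi\rangle,
\]
i.e. $H_x^{*}=H_x$. (The normalizing factor $\prod_i|t_i-x|\exp\bigl(\tfrac12\chi_i(\log(t_i+\eps_i-x))\bigr)$ relating $H_x$ and $\HH_x$ is exactly what makes this identity hold; $\HH_x$ by itself is not self-adjoint.)

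For commutativity, fix distinct $x,x'$ (both $\neq t_i,\infty$) and write $\nu_x,\nu_{x'}$ for the two finite measures. Expanding both products weakly and applying Fubini — legitimate because $\iint\norm{\psi}\,\norm{\phi}\,d\nu_x\,d\nu_{x'}<\infty$ — gives $\langle H_xH_{x'}\psi,\phi\rangle=\iint_{\CC^2}\langle U_{s,x}U_{s',x'}\psi,\phi\rangle\,d\nu_x(s)\,d\nu_{x'}(s')$ and likewise $\langle H_{x'}H_x\psi,\phi\rangle=\iint_{\CC^2}\langle U_{\sigma,x'}U_{\tau,x}\psi,\phi\rangle\,d\nu_{x'}(\sigma)\,d\nu_x(\tau)$. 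It then suffices to produce a birational change of variables $(s,s')\mapsto(\sigma,\tau)$ on $\CC^2$, defined off a $\nu_x\times\nu_{x'}$-null set, such that $g_{s',x'}g_{s,x}=g_{\tau,x}g_{\sigma,x'}$ in $\prod_i\PGL_2(\CC[\eps_i])$ (equivalently $U_{s,x}U_{s',x'}=U_{\sigma,x'}U_{\tau,x}$) and which carries $d\nu_x(s)\,d\nu_{x'}(s')$ to $d\nu_{x'}(\sigma)\,d\nu_x(\tau)$. Geometrically this is just the statement that an elementary Hecke modification at $x$ followed by one at $x'$ produces the same quasiparabolic bundle as the two performed in the opposite order with suitably transformed line-parameters, which holds because elementary modifications at distinct points of $\PP^1$ commute; a convenient reduction uses the factorization $g_{s,x,i}=d(s,x)\,h_i(s,x)$ with $d(s,x)=\diag(s-1,\,s-x)$ independent of $i$ and $h_i(s,x)=\left(\begin{smallmatrix}-x & (t_i+\eps_i)s\\ -1 & s\end{smallmatrix}\right)$, concentrating the $i$-dependence into one factor and reducing the identity to a computation in $\PGL_2(\CC)$ plus bookkeeping of the common diagonal dilations, as in the tamely ramified case of \cite{EFK3}. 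This last step is the main obstacle: writing the reparametrization $(s,s')\mapsto(\sigma,\tau)$ explicitly, checking that its Jacobian exactly cancels the discrepancy between $\nu_x(s)\nu_{x'}(s')$ and $\nu_{x'}(\sigma)\nu_x(\tau)$, and ensuring that the branches of $\log$ entering $\rho$ match on the two sides and that the degeneracy loci of the rational map are $\nu$-null. Everything else (boundedness, self-adjointness) is essentially formal once \cref{unitary} and the explicit involution $\iota_x$ are in hand.
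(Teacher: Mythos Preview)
Your proposal is correct and follows essentially the same route as the paper: boundedness from $H_x=\int U_{s,x}\,d\nu(s)$ with $\nu$ finite, self-adjointness from the involution $\iota_x(s)=\tfrac{x(s-1)}{s-x}$ satisfying $g_{s,x}^{-1}=g_{\iota_x(s),x}$ and preserving $\nu$, and commutativity from the explicit birational reparametrization of the $(s,s')$-integral coming from the fact that Hecke modifications at distinct points commute. The paper simply writes down the reparametrization $s_1'=\tfrac{s_2-1}{s_2-x_2}\cdot\tfrac{x_1s_2-x_2s_1}{s_2-s_1}$ (and symmetrically $s_2'$) and declares the Jacobian identity $d\nu_{x_1}(s_1)\,d\nu_{x_2}(s_2')=d\nu_{x_1}(s_1')\,d\nu_{x_2}(s_2)$ a routine check, rather than using your factorization $g_{s,x,i}=d(s,x)h_i(s,x)$, but the content is the same.
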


\begin{proof}
	Boundedness follows from the previous proposition and $\norm{H_x} < \infty$.	

It is easy to check that $g_{s,x}^{-1} = g_{\sigma(s),x}$, where $\sigma(s) = \frac{x(s-1)}{s-x}$. This implies $U_{s,x}\dual = U_{\sigma(s),x}$. Also, the measure $d\nu(s)$ is invariant under the involution $s\mapsto \sigma(s)$. This implies that $H_x$ are self-adjoint.

Let $x_1,x_2$ be two distinct points distinct from $t_i,\infty$. The fact that operators $H_{x_1},H_{x_2}$ commute is a consequence of the general fact that Hecke modifications at distinct points $(x_1,s_1), (x_2,s_2)$ commute. Concretely, it can also be checked directly using \cref{unitary}; it can be reduced to the routine calculation that $d\nu_{x_1}(s_1)d\nu_{x_2}(s_2') = d\nu_{x_1}(s_1')d\nu_{x_2}(s_2)$, where 
\[d\nu_{x_i}(s) = \abs{\frac{x_i(x_i-1)}{s(s-1)(s-x_i)}} dsd\bar{s}\]
and $s_1' = \frac{s_2-1}{s_2-x_2}\cdot\frac{x_1s_2 - x_2s_1}{s_2-s_1}$ and symmetric for $s_2'$. Here, $s_1'$ is the coordinate of the parabolic line $s_1$ after Hecke modification at $(x_2,s_2)$, and vice versa.
\end{proof}

\subsection{Compactness} 
\label{SecCompactness}
\begin{prop}
\label{compact}
	The Hecke operators $H_x$ are compact and norm-continuous in $x$, for $x\neq t_i,\infty$.
\end{prop}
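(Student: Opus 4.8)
The plan is to reduce both assertions to the statement that, for each sufficiently large \emph{odd} integer $N$, the operator $H_x^N$ is compact and $x\mapsto H_x^N$ is norm-continuous. This suffices. By the preceding proposition $H_x$ is bounded and self-adjoint, so $\sigma(H_x)\subset\RR$ and $\sigma(H_x^N)=\{\lambda^N:\lambda\in\sigma(H_x)\}$. If $H_x^N$ is compact then $\sigma(H_x^N)\setminus\{0\}$ consists of isolated eigenvalues of finite multiplicity accumulating only at $0$; since $\lambda\mapsto\lambda^N$ is proper and, for $N$ odd, injective on $\RR$, the same is true of $\sigma(H_x)$, and every nonzero eigenspace of $H_x$ sits inside a finite-dimensional eigenspace of $H_x^N$, whence $H_x$ is compact. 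For norm-continuity, fix $x_0\neq t_i,\infty$; by \cref{unitary} one has $\|H_x\|\le\nu_x(\CC)$, which is continuous in $x$, so for $x$ in a neighbourhood of $x_0$ all the spectra $\sigma(H_x^N)$ lie in a fixed interval $[-C^N,C^N]$. The real $N$-th root $g(t)=t^{1/N}$ is continuous on $[-C^N,C^N]$ and, for $N$ odd, satisfies $g(H_x^N)=H_x$ by the functional calculus; since the continuous functional calculus is norm-continuous on self-adjoint operators with uniformly bounded spectrum (approximate $g$ uniformly by polynomials on $[-C^N,C^N]$), norm-continuity of $x\mapsto H_x^N$ gives that of $x\mapsto H_x=g(H_x^N)$.

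To analyse $H_x^N$, iterate \cref{unitary}: composing $N$ of the unitaries $U_{s,x}$ and writing out the action on half-densities yields the explicit $N$-fold integral
\[(H_x^N\psi)(\mathbf u)=\int_{\CC^N}\psi\big(\Phi_x(\vec s,\mathbf u)\big)\,w_x(\vec s,\mathbf u)\prod_{j=1}^N d\nu_x(s_j),\]
where $\Phi_x(\cdot,\mathbf u)$ records the result of $N$ successive Hecke modifications at $x$ and $w_x$ is the product of the cocycle factors. The composition of Hecke modifications at $x$ is the $N$-fold Hecke correspondence at $x$, whose projection to $\Bun\times\Bun$ becomes dominant once $N\ge\dim_{\CC}\PP^{m-1}=m-1$; concretely, for generic fixed $\mathbf u$ the map $\vec s\mapsto\Phi_x(\vec s,\mathbf u)$ is then a submersion on a dense open set. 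Singling out $m-1$ of the coordinates $s_j$ as output coordinates, changing variables to $\mathbf v=\Phi_x(\vec s,\mathbf u)$ and keeping the remaining $N-(m-1)$ as fibre coordinates turns the display into a genuine integral operator with kernel
\[K_x^{(N)}(\mathbf u,\mathbf v)=\int_{\mathrm{fibre}}\frac{w_x}{|\mathrm{Jac}\,\Phi_x|},\]
which is continuous away from a ``bad set'' $\Sigma$: the critical values of $\Phi_x(\cdot,\mathbf u)$ together with the degeneracy loci coming from the poles of $\nu_x$ and from coinciding coordinates. The point to establish is that $K_x^{(N)}$ obeys a Schur bound $\sup_{\mathbf u}\int|K_x^{(N)}(\mathbf u,\mathbf v)|\,d\mathbf v\le C$ (and its transpose), uniformly for $x$ near $x_0$, and that the contribution to this integral of any $\delta$-neighbourhood of $\Sigma$ is $O(\delta^{c})$ for some $c>0$. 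Granting this, $H_x^N$ is the operator-norm limit, uniform in $x$ near $x_0$, of the truncated operators with kernel $K_x^{(N)}\,\mathbf 1_{\{\mathrm{dist}(\cdot,\Sigma)>\delta\}}$; these have bounded kernels on the compact space $\PP^{m-1}\times\PP^{m-1}$, hence are Hilbert--Schmidt, and their kernels depend continuously on $x$. Therefore $H_x^N$ is compact and $x\mapsto H_x^N$ is norm-continuous, which by the first paragraph completes the proof.

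The remaining point, which I expect to be the main obstacle, is the Schur estimate near $\Sigma$. Near the degeneracy loci — poles of $\nu_x$ at $s_j\in\{0,1,x,\infty\}$, and coincidences of coordinates — one argues exactly as in the proof of \cref{PropHxWellDefinedAndBounded}: rescale using the homogeneity of $\psi$ and check that the exponents produced stay above the two-dimensional integrability threshold $-2$, which also yields the $O(\delta^c)$ smallness there. Near the critical locus of $\Phi_x(\cdot,\mathbf u)$ — where a pair of consecutive Hecke modifications nearly cancels — one must understand the singularities of the iterated Hecke map: a simple fold contributes a kernel of size $\sim\mathrm{dist}^{-1}$ in the two real directions transverse to the critical-value hypersurface, which is integrable there, so both the Schur bound and the smallness survive. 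The more degenerate strata still have to be controlled, and here one has two options: either verify, by direct analysis of $\Phi_x$ (built from Möbius transformations in each coordinate that share the parameters $s_j$), that only mild fold/cusp-type degeneracies occur, in the spirit of the tame-ramification computation of \cite{EFK3}; or take $N$ well above $m-1$, so that integrating over the extra fibre parameters smooths the dangerous directions. Equivalently, writing $H_x^N=\rho_{*}(\mu_x^{*N})$ where $\mu_x$ is the finite measure on $\prod_i\PGL_2(\CC[\eps_i])$ obtained by pushing $\nu_x$ forward along $s\mapsto g_{s,x}$, it would be enough to show that the $N$-fold convolution $\mu_x^{*N}$ has a smooth, compactly supported density for $N$ large, since then $\rho_{*}(\mu_x^{*N})$ is trace class.
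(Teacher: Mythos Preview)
Your overall architecture---pass to $H_x^N$ and realise it either as a kernel operator with Schur bounds or as $\rho_*(\mu_x^{*N})$ for a convolution measure on $\mathbf G=\prod_i\PGL_2(\CC[\eps_i])$---is exactly the mechanism of \cite{EFK3}, Proposition~3.13, and is what the paper invokes. The reduction in your first paragraph (odd $N$-th root via functional calculus) is correct but not needed: the EFK3 argument produces compactness and norm-continuity of $H_x$ directly once one knows enough about $\mu_x^{*N}$.

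The genuine gap is the step you treat as evident: that for large $N$ the iterated Hecke map $\vec s\mapsto\Phi_x(\vec s,\mathbf u)$ (equivalently $\phi_N:(s_1,\dots,s_N)\mapsto g_{s_1,x}\cdots g_{s_N,x}\in\mathbf G$) is dominant. In the tame case this is a short computation, but in the wild case the target group is $\PGL_2(\CC[\eps])$ rather than $\PGL_2(\CC)$, and ``$N\ge\dim\PP^{m-1}$'' is only a dimension count, not a proof. This dominance is the \emph{entire content} of the paper's argument: the proof defers the analytic part to \cite{EFK3} and devotes four lemmas to showing that the elements $g_{s,x}$ generate a dense subgroup of $\mathbf G$. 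The argument proceeds by (i) computing $g(s)^{-1}g'(s)$ to see that the constant-term Lie algebra $\sl_2(\CC)$ is hit; (ii) producing, via an explicit commutator, an element with vanishing constant term but nonzero $\eps$-term, and bootstrapping to all of $\sl_2(\CC[\eps])$; (iii) handling the product over distinct $t_i$ by a Goursat-type lemma together with the classification of homomorphisms $G(\CC[\eps])\to G(\CC)$, ruling out graph subgroups because the conjugacy invariants $\tr^2/\det$ of $g_{t_i,x}(s)$ are distinct functions of $s$. None of this is visible from your sketch, and without it neither your submersion claim nor the ``$\mu_x^{*N}$ has a density'' claim has any footing.

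Separately, your Schur-bound route is, by your own account, incomplete: the control of the critical locus of $\Phi_x$ (``more degenerate strata still have to be controlled'') is exactly where the work lies, and ``take $N$ large so the extra fibres smooth things out'' is again the dominance statement in disguise. Finally, even granting a smooth compactly supported density for $\mu_x^{*N}$, concluding trace-class for $\rho_*$ is not automatic for an arbitrary unitary representation; what one actually uses (and what suffices for compactness) is that an absolutely continuous measure with $L^1$ density on the group acts compactly in the representations at hand, which is part of the EFK3 input.
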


\begin{proof}
Using \cref{unitary}, the exact same argument as in (\cite{EFK3}, Proposition 3.13) goes through, the only thing that we have to show is that the rational map \[\phi_N: \AA^N_{\CC} \mapsto \mathbf{G}_{n,m} = \prod_{i=0}^k \PGL_2(\C[\eps_i])\] given by $(s_1,\dots,s_N)\mapsto g_{s_1,x}\cdots g_{s_N,x}$, where, say, $N=4m$, is dominant. This is shown in Lemma~\ref{LemDominant} below.
\end{proof}

\begin{rem}
    In particular, in Lemmas~\ref{LemElementsGeneratesl2} and~\ref{LemElementsGenerateProduct} below, we give a proof of group generation claimed in~\cite{EFK3} in the proof of Proposition~3.13 that used Lemma~8.9 therein. 
\end{rem}

We now prove \cref{LemDominant}. Denote $G=\PGL_2$.

\begin{lem}
\label{LemElementsGeneratesl2}
    For any $x\neq t\in \CC$, the elements 
	\[g(s) = g_{t,x}(s) = \begin{pmatrix}
		-(s-1)x & ts(s-1) \\
		-(s-x) & s(s-x)
	\end{pmatrix} \in G(\CC)\]
	generate a dense subgroup of $G(\CC)$, as $s$ ranges in $\CC\backslash\{0,1,x\}$.
\end{lem}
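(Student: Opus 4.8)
The plan is to replace the abstractly generated subgroup by its closure and reduce to a Lie algebra computation. Let $H=\langle g(s):s\in\CC\setminus\{0,1,x\}\rangle$ and let $\ovl H$ be its closure inside the Lie group $G(\CC)=\PGL_2(\CC)$. By Cartan's closed subgroup theorem $\ovl H$ is an embedded Lie subgroup; write $\fh\subseteq\sl_2(\CC)=\operatorname{Lie}\PGL_2(\CC)$ for its Lie algebra. Since $\PGL_2(\CC)$ is connected, it suffices to prove $\fh=\sl_2(\CC)$: then $\ovl H$ contains the identity component, which is all of $G(\CC)$, so $H$ is dense.

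To detect elements of $\fh$ I would differentiate the curve $\tau\mapsto g(\tau)$. For fixed $s$ in the domain, the map $\tau\mapsto g(s)^{-1}g(\tau)$ is holomorphic, takes values in $\ovl H$, and equals the identity at $\tau=s$; hence its derivative there, namely the logarithmic derivative $\xi(s):=g(s)^{-1}g'(s)\in\sl_2(\CC)$ (computed with any matrix representative, the scalar $\det g(s)$ being killed in $\sl_2$), lies in $T_e\ovl H=\fh$. Since the map is holomorphic, its real differential is complex linear, so in fact $\CC\cdot\xi(s)\subseteq\fh$; as $\fh$ is a real subspace it then contains $W:=\operatorname{span}_\CC\{\xi(s):s\in\CC\setminus\{0,1,x\}\}$. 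It remains to show $W=\sl_2(\CC)$.

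This is the one genuinely computational step. Carrying out the multiplication $g(s)^{-1}g'(s)$ and reducing modulo scalar matrices, in the standard basis $e,h,f$ of $\sl_2$ one finds: the $f$-component of $\xi(s)$ equals $\tfrac{x(x-1)}{t-x}\cdot\tfrac{1}{s(s-1)(s-x)}$, the $e$-component equals $\tfrac{t(1-x)}{t-x}\cdot\tfrac{s}{(s-1)(s-x)}$, and the $h$-component equals $\tfrac{1}{2(t-x)}\cdot\tfrac{-(t-x)s^2+2x(t-1)s-x(t-x)}{s(s-1)(s-x)}$. Using $x\neq 0,1$ (the points $0,1$ being among the marked points, hence distinct from $x$), $t\neq x$, and $t\notin\{0,1\}$, all three constants are nonzero and the three numerators $1$, $s^2$, $-(t-x)s^2+2x(t-1)s-x(t-x)$ are linearly independent (the last has a genuine $s$-term precisely because $x(t-1)\neq 0$). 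Hence these functions of $s$ are $\CC$-linearly independent, so the vectors $\xi(s)$ span $\sl_2(\CC)$, giving $W=\fh=\sl_2(\CC)$ and therefore $\ovl H=G(\CC)$, i.e.~$H$ is dense.

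The main obstacle is purely bookkeeping: performing and simplifying the $2\times 2$ computation for $\xi(s)$ and verifying the independence of its components while tracking where each hypothesis enters. Everything else is soft --- Cartan's theorem, connectedness of $\PGL_2(\CC)$, and the elementary fact that a nonconstant holomorphic curve through $e$ in $\ovl H$ pins down a complex line in $\fh$. One caveat worth flagging is that the restriction $t\notin\{0,1\}$ is essential --- for $t=0$ every $g(s)$ is lower triangular, and for $t=1$ every $g(s)$ fixes $[1:1]\in\PP^1$, so $H$ is contained in a Borel and the statement fails --- but this does not arise in the application, where $t$ is one of the collided points $t_0,\dots,t_k$ and hence lies in $\CC\setminus\{0,1\}$. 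A useful sanity check for the computation is the geometric picture behind it: $\ovl H=\PGL_2(\CC)$ exactly because the family $\{g(s)\}$ has no common fixed point and no invariant pair of points on $\PP^1$.
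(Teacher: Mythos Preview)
Your argument is correct and follows essentially the same route as the paper: pass to the closure, invoke Cartan, feed the logarithmic derivative $g(s)^{-1}g'(s)$ into the Lie algebra, and verify that these elements span $\sl_2(\CC)$ by inspecting the explicit matrix. Your observation that the lemma as literally stated fails for $t\in\{0,1\}$ (and likewise needs $x\notin\{0,1\}$) is a genuine correction the paper glosses over---in the application $t$ is one of the collided points $t_0,\dots,t_k$, distinct from $0,1$, so no harm is done, but the hypothesis should be recorded.
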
 
\begin{proof}
    As $g(s)^{-1} = g(\frac{x(s-1)}{s-x})$, this set is closed under inverses and contains the identity. Let $\fg=\sl_2(\CC)$ be the Lie algebra of $G(\CC)$, and let $H$ be the closure of the subgroup that these elements generate. Then $H$ is a Lie group, so that we may consider its Lie algebra $\fh$. It suffices to show that $\fh=\fg$. By definition, $\fh$ contains the elements
\[g(s)^{-1}g'(s) = \frac{1}{s(s-1)(s-x)}
\begin{pmatrix}
	\frac{sx(t-1)}{t-x} - \frac{1}{2}(s^2+x) & \frac{s^2t(1-x)}{t-x} \\
	\frac{x(x-1)}{t-x} & \frac{1}{2}(s^2+x) - \frac{sx(t-1)}{t-x}
\end{pmatrix},\]
which linearly spans the 3-dimensional space $\fg=\sl_2(\CC)$. 
\end{proof}

\begin{lem}
\label{LemElementsGenerateGeps}
    Denote $\CC[\eps] = \CC[\eps]/(\eps^{n+1})$. The elements $g(s) = g_{t+\eps, x}(s)$ generate a dense subgroup of $G(\CC[\eps])$.
\end{lem}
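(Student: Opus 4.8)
The strategy is to imitate the proof of \cref{LemElementsGeneratesl2}: let $H$ be the closure of the subgroup of $G(\CC[\eps])$ generated by the $g(s) = g_{t+\eps,x}(s)$ as $s$ ranges over $\CC\setminus\{0,1,x\}$. Since the set of such $g(s)$ is closed under inverses (as $g(s)^{-1} = g(\tfrac{x(s-1)}{s-x})$) and contains the identity (at a limiting value of $s$), $H$ is a closed Lie subgroup of $G(\CC[\eps])$; write $\fh \subseteq \gl_2(\CC[\eps])/\CC[\eps]$ (equivalently $\sl_2(\CC[\eps])$) for its Lie algebra. It suffices to show $\fh$ is everything. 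First I would compute the curvature-type elements $g(s)^{-1}g'(s) \in \fh$ exactly as in the rank-one case; over $\CC[\eps]$ these are $\CC[\eps]$-valued rational functions of $s$, and I would argue that, letting $s$ vary, their $\CC$-span (together with iterated Lie brackets) generates $\sl_2(\CC[\eps])$ as a Lie algebra over $\CC$. The key point is that the constant-in-$\eps$ part already spans $\sl_2(\CC)$ by \cref{LemElementsGeneratesl2}, so $\fh$ surjects onto $\sl_2(\CC)$; then one must propagate this up the $\eps$-adic filtration.

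\textbf{Propagating up the filtration.} The cleanest way to organize the induction is via the lower central-type filtration of the nilpotent ideal $\fn = \eps\,\sl_2(\CC[\eps])$. I would argue by induction on $k$ that $\fh \supseteq \eps^k \sl_2(\CC)$ for all $0 \le k \le n$. The base case $k=0$ is \cref{LemElementsGeneratesl2} (applied after reducing mod $\eps$, noting $g_{t+\eps,x}(s) \bmod \eps = g_{t,x}(s)$). For the inductive step: since $t+\eps$ appears in $g(s)$, differentiating the defining matrix entries with respect to the formal parameter $t$ (which is legitimate because the $g(s)$ depend polynomially on $t$, and $\fh$ as a function of $t$ varies in a controlled way) produces new elements of the form $\partial_t(\text{stuff})$ carrying an explicit $\eps$; more concretely, the matrix entry $ts(s-1)$ becomes $(t+\eps)s(s-1)$, so the ``$\eps$-linear part'' of $g(s)^{-1}g'(s)$ is a nonzero multiple of the nilpotent $e = \begin{pmatrix} 0 & 1 \\ 0 & 0\end{pmatrix}$ times $\eps$. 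Bracketing this with the already-obtained copy of $\sl_2(\CC)$ in $\fh$ yields all of $\eps\,\sl_2(\CC)$ (since $e$ generates $\sl_2$ under the adjoint action of $\sl_2$). Iterating: once $\eps^{k-1}\sl_2(\CC) \subseteq \fh$, bracketing $\eps^{k-1} e$-type elements against the degree-zero $\sl_2(\CC)$ and against each other, together with the explicit higher-$\eps$ terms of $g(s)^{-1}g'(s)$, gives $\eps^k \sl_2(\CC)$. Summing over $k$ gives $\fh = \sl_2(\CC[\eps])$, hence $H = G(\CC[\eps])$ by connectedness of $G(\CC[\eps])$ in the relevant topology (or rather: $H$ contains the identity component, which is all of $\PGL_2(\CC[\eps])$ over $\CC$).

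\textbf{Main obstacle.} The routine part is writing down $g(s)^{-1}g'(s)$ over $\CC[\eps]$ and reading off its $\eps$-graded pieces. The genuine obstacle is making the induction airtight: one must check that the $\eps^k$-homogeneous components of the explicit family $\{g(s)^{-1}g'(s) : s\}$, when combined via \emph{Lie brackets with lower-order terms}, actually sweep out all of $\eps^k\sl_2(\CC)$ rather than some proper $\sl_2(\CC)$-submodule — but since $\sl_2(\CC)$ is simple, any nonzero $\sl_2(\CC)$-submodule of $\eps^k\sl_2(\CC) \cong \sl_2(\CC)$ is everything, so it is enough to exhibit a single nonzero element of $\eps^k\sl_2(\CC)$ in $\fh$ at each stage. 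That single element is visible directly: the coefficient of $\eps^k$ in the $(1,2)$-entry of $g(s)^{-1}g'(s)$ is (for suitable $s$) nonzero, since the $\eps$-dependence enters only through the single entry $(t+\eps)s(s-1)$ and one can track it explicitly. So the argument reduces to a finite, if slightly tedious, bookkeeping of $\eps$-coefficients, with the simplicity of $\sl_2$ doing the real work at each level. Alternatively — and this may be the slicker route worth mentioning — one can invoke \cref{C[eps]-maps}: any proper closed subgroup of $G(\CC[\eps])$ surjecting onto $G(\CC)$ would, via \cref{pavel}, force the kernel $L$ of $H \hookrightarrow G(\CC[\eps]) \to G(\CC)$ to be a proper normal subgroup of the unipotent group $\ker(G(\CC[\eps]) \to G(\CC))$ that is also $G(\CC)$-stable under conjugation; since that unipotent group is $\eps\,\sl_2(\CC[\eps])$ with $\PGL_2(\CC)$ acting by the adjoint representation, its $G(\CC)$-stable subgroups are exactly the $\bigoplus_{j\ge k}\eps^j\sl_2(\CC)$, and a short computation with the $\eps^n$-component of $g(s)^{-1}g'(s)$ rules out every proper one. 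I would present the direct filtration argument as the main line and remark on this second route.
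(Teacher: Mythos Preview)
There is a genuine gap at the heart of your main argument. The crucial step is to produce an element of $\fh$ whose constant term (mod $\eps$) vanishes but whose $\eps$-coefficient does not, and your proposal never does this. Observing that $A(s):=s(s-1)(s-x)\,g(s)^{-1}g'(s)$ has nonzero $\eps$-linear part $A_1(s)$ is not the same as exhibiting an element of $\fh\cap \eps\,\sl_2(\CC[\eps])$: the element $A(s)$ also carries the nonzero constant part $A_0(s)$, and you cannot ``bracket $A_1(s)\eps$ with the copy of $\sl_2(\CC)$ in $\fh$'' because neither $A_1(s)\eps$ nor $\sl_2(\CC)$ is known to lie in $\fh$. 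Your base case in fact asserts $\sl_2(\CC)\subseteq\fh$, but \cref{LemElementsGeneratesl2} only shows that $\fh$ \emph{surjects onto} $\sl_2(\CC)$ modulo $\eps$ --- the constant matrices need not sit inside $\fh$. Differentiating in $t$ is also not a legal move: $t$ is fixed, and $\fh$ is defined for that fixed $t$.

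The obstruction is real, not just a matter of phrasing. A direct computation shows $A(s)$ is quadratic in $s$ with values in $\sl_2(\CC[\eps])$, so the linear span of $\{A(s)\}$ is exactly three-dimensional and is the graph of a linear map $\sl_2(\CC)\to\eps\,\sl_2(\CC[\eps])$; in particular it contains \emph{no} nonzero element with vanishing constant term. One is therefore forced to take a Lie bracket and verify that the result escapes this graph. This is exactly what the paper does: it writes $A_1(s)$ explicitly as a linear function of the entries of $A_0(s)$, then computes the commutator $[A(s_1+1)-A(s_1),\,A(s_2+1)-A(s_2)]$ and checks that its $\eps$-coefficient is \emph{not} of that linear form, so a suitable combination gives the desired $X=B_1\eps+\cdots$ with $B_1\neq 0$. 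Once that element is in hand, your filtration argument (bracket with $A(s)$, invoke simplicity of $\sl_2$, iterate to higher powers of $\eps$) is correct and matches the paper. Your alternative route via \cref{pavel} and \cref{C[eps]-maps} does not apply as stated --- there is no product $G_1\times G_2$ here --- and the asserted classification of $G(\CC)$-stable subgroups of the congruence kernel is incomplete for $n\ge 2$; in any case ruling out the largest proper one would still require essentially the same bracket computation.
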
 
\begin{proof}
    Let $H$ be the closure of the subgroup they generate, and let $\fh$ be its Lie algebra, which lies in $\fg = \sl_2(\CC[\eps])$. It suffices to show $\fh=\fg$. We know $\fh$ contains the elements $A(s) = s(s-1)(s-x)g(s)^{-1}g'(s) = A_0(s) + A_1(s)\eps + \dots$, where
\[A_0(s) = \begin{pmatrix}
	\frac{sx(t-1)}{t-x} - \frac{1}{2}(s^2+x) & \frac{s^2t(1-x)}{t-x} \\
	\frac{x(x-1)}{t-x} & \frac{1}{2}(s^2+x) - \frac{sx(t-1)}{t-x}
\end{pmatrix}, \quad A_1(s) = \frac{x(1-x)}{(t-x)^2} \begin{pmatrix}
	s & -s^2 \\
	1 & -s
\end{pmatrix}.\]

Let us first produce an element $X\in\fh$ whose constant term is 0. Suppose we write $A_0(s) = \begin{pmatrix}
	a & b \\
	c & -a
\end{pmatrix}$, then 
\[A_1(s) = \begin{pmatrix}
	\frac{1-x}{(t-x)(t-1)}a + \frac{1}{2t(t-1)}b + \frac{1}{2(1-t)}c  & -\frac{x}{t(t-x)}b \\
	- \frac{1}{t-x}c & -(\frac{1-x}{(t-x)(t-1)}a + \frac{1}{2t(t-1)}b + \frac{1}{2(1-t)}c)
\end{pmatrix}.\]
It is not hard to verify that the commutator $[A(s_1+1)-A(s_1), A(s_2+1)-A(s_2)]$ is given by
\[\frac{4(s_1-s_2)t(t-1)x(x-1)}{(t-x)^2}\begin{pmatrix}
	0 & 1\\
	0 & 0
\end{pmatrix} + \frac{4(s_1-s_2) x(x-1)(2tx-t-x)}{(x-t)^3}\begin{pmatrix}
	0 & 1\\
	0 & 0
\end{pmatrix}\eps+\dots\]
which is linearly independent from the elements of the above form. Thus some linear combination of them would give $X\in \fh$ whose constant term is 0 and $\eps$ term is nonzero.

  
  Now that we have found one element $X\in \fh$, $X = B_1\eps+\dots$ with $B_1\neq 0$, consider its commutator with all the elements $A(s) = A_0+A_1\eps+\dots$. Since $[A(s), X] = [A_0,B_1]\eps+\dots$, and $\sl_2(\CC)$ is simple, by Step 1, we may now generate all elements of form $B_1\eps+\dots$, where $B_1\in \sl_2(\CC)$. Now, taking commutators once again, we can generate all elements of form $B_2\eps^2+\dots$, where $B_2\in \sl_2(\CC)$, and so on. Thus we have shown that $\fh = \sl_2(\CC[\eps]) = \fg$ as desired.
\end{proof}

\begin{lem}
\label{LemElementsGenerateProduct}
    The elements $g_{s,x} = (g_{t_0+\eps_0,x}(s), g_{t_{1}+\eps_1,x}(s),\dots,g_{t_{k}+\eps_k,x}(s))$, where $s\in \CC\backslash\{0,1,x\}$, generate a dense subgroup of $\mathbf{G}_{n,k} = \prod_{i=0}^k \PGL_2(\C[\eps_i])$.
\end{lem}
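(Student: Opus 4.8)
The plan is to induct on $k$ using Goursat's lemma in the form of \cref{pavel}, the base case $k=0$ being \cref{LemElementsGenerateGeps}. For the inductive step, let $\mathbf H$ be the closure of the subgroup generated by the $g_{s,x}$ inside $\mathbf G:=\prod_{i=0}^k\PGL_2(\C[\eps_i])$; it is a closed, hence Lie, subgroup. Write $G':=\prod_{i=0}^{k-1}\PGL_2(\C[\eps_i])$ and $G_k:=\PGL_2(\C[\eps_k])$. By the inductive hypothesis together with \cref{LemElementsGenerateGeps}, $\mathbf H$ surjects onto $G'$ and onto $G_k$ (surjectivity onto the factors is seen, as in the proofs of \cref{LemElementsGeneratesl2,LemElementsGenerateGeps}, by passing to the Lie algebra of $\mathbf H$, where it follows from the corresponding statements for the generators). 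Then \cref{pavel} applied to $\mathbf H\subseteq G'\times G_k$ produces a normal subgroup $L\lhd G_k$ and a smooth homomorphism $f\colon G'\to G_k/L$ of which $\mathbf H$ is the graph-preimage; it suffices to prove $L=G_k$, for then $f$ is trivial and $\mathbf H=\mathbf G$.

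Suppose $L\neq G_k$. First I would reduce $f$ to a homomorphism into $\PGL_2(\C)$. The image of $L$ under the reduction $G_k\twoheadrightarrow\PGL_2(\C)$ is a normal subgroup of the simple group $\PGL_2(\C)$, hence trivial or everything. If it is everything, then $G_k=L\cdot\ker(G_k\to\PGL_2(\C))$, so $G_k/L$ is a quotient of the congruence subgroup $\ker(G_k\to\PGL_2(\C))$, which is nilpotent; since each factor $\PGL_2(\C[\eps_i])$, hence $G'$, is perfect, $f$ would be trivial, contradicting $L\ne G_k$. Therefore $L\subseteq\ker(G_k\to\PGL_2(\C))$ and there is a surjection $\rho\colon G_k/L\twoheadrightarrow\PGL_2(\C)$. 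Set $\bar f:=\rho\circ f\colon G'\to\PGL_2(\C)$ and let $\bar f_i\colon\PGL_2(\C[\eps_i])\to\PGL_2(\C)$, for $0\le i\le k-1$, be the restriction of $\bar f$ to the $i$-th factor, so that $\bar f(h_0,\dots,h_{k-1})=\prod_i\bar f_i(h_i)$ and the images of $\bar f_i$ and $\bar f_j$ commute for $i\ne j$. By \cref{C[eps]-maps}, each $\bar f_i$ is either trivial or of the form $\psi_i\circ\pi_i$ with $\pi_i$ the constant-term projection and $\psi_i$ a continuous — hence inner — automorphism of $\PGL_2(\C)$; in the latter case $\bar f_i$ is surjective. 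If two of the $\bar f_i$ were nontrivial their images would be two copies of $\PGL_2(\C)$ commuting inside $\PGL_2(\C)$, impossible since $\PGL_2(\C)$ is nonabelian with trivial center. Hence at most one of them, say $\bar f_{i_0}$, is nontrivial.

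Finally I would feed in the explicit elements. Since $(g_{t_0+\eps_0,x}(s),\dots,g_{t_k+\eps_k,x}(s))\in\mathbf H$ we have $g_{t_k+\eps_k,x}(s)\,L=f\big(g_{t_0+\eps_0,x}(s),\dots,g_{t_{k-1}+\eps_{k-1},x}(s)\big)$; applying $\rho$ and using $\pi_i\big(g_{t_i+\eps_i,x}(s)\big)=g_{t_i,x}(s)$ gives the identity $g_{t_k,x}(s)=\psi_{i_0}\big(g_{t_{i_0},x}(s)\big)$ in $\PGL_2(\C)$, valid for all $s\in\C\setminus\{0,1,x\}$ (and, if no $\bar f_i$ is nontrivial, the absurd identity $g_{t_k,x}(s)=1$). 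As $\psi_{i_0}$ is inner, $g_{t_k,x}(s)$ and $g_{t_{i_0},x}(s)$ are conjugate in $\PGL_2(\C)$, so the invariant $\tr^2/\det$ agrees on the two families. Taking the matrix representatives from \cref{LemElementsGeneratesl2} one computes $\tr g_{t,x}(s)=s^2-2sx+x$ and $\det g_{t,x}(s)=(t-x)s(s-1)(s-x)$, so $(t-x)\cdot\tfrac{\tr^2}{\det}=\tfrac{(s^2-2sx+x)^2}{s(s-1)(s-x)}$ is independent of $t$; since $\tr^2/\det$ is not identically zero, the two families can have equal $\tr^2/\det$ only if $t_k=t_{i_0}$. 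This contradicts the hypothesis that $t_0,\dots,t_k$ are distinct, so $L=G_k$ and the induction is complete.

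I expect the Goursat analysis of the middle step to be the main obstacle: a priori $\mathbf H$ could be a proper subgroup still projecting onto every factor (a ``twisted'' subdirect product), and ruling this out needs both the structural input of \cref{C[eps]-maps}, which forces any such correlation to be visible already over the residue field $\C$, and the elementary fact that the reductions $g_{t_i,x}(s)$ are pairwise non-conjugate as families in $s$. It is exactly at this last point that the distinctness of $t_0,\dots,t_k$ enters, which is why one should not expect the analogous statement once points are genuinely collided.
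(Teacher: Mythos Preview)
Your proof is correct and follows essentially the same route as the paper: induction on $k$, Goursat's lemma (\cref{pavel}) to produce $L\lhd G_k$ and $f\colon G'\to G_k/L$, reduction modulo the congruence subgroup to land in $\PGL_2(\C)$, then \cref{C[eps]-maps} plus the $\tr^2/\det$ invariant to derive a contradiction from distinctness of the $t_i$. Your write-up is in fact more complete than the paper's in two places: you handle explicitly the case where the image of $L$ in $\PGL_2(\C)$ is everything (via the nilpotent/perfect argument), and you spell out why at most one restriction $\bar f_{i_0}$ can be nontrivial, which the paper leaves implicit before invoking the $\tr^2/\det$ comparison.
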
 

\begin{proof}
    Use induction on $k$. The induction basis $k=0$ is already shown in \cref{LemElementsGenerateGeps}. For the induction step, let $H$ be the closure of the subgroup that $g_{s,x}$ generate. By induction hypothesis, $H$ surjects onto $\mathbf{G}_{n,k-1} = \prod_{i=0}^{k-1}\PGL_2(\C[\eps_i])$ (the first $k$ factors) and $G(\CC[\eps_k])$ (the last factor). By \cref{pavel}, it follows that $H\subset \mathbf{G}_{n,k-1}\times G(\CC[\eps_k])$ is the preimage of the graph of some smooth surjective map $f: \mathbf{G}_{n,k-1} \to G(\CC[\eps_k])/L$, where $L\lhd G(\CC[\eps_k])$ is the kernel of $H\to \mathbf{G}_{n,k-1}$. Since $G(\CC)$ is simple and any normal subgroup of $G(\CC[\eps_k])$ containing $G(\CC)$ coincides with $G(\CC[\eps_k])$, there are two options: either $L$ is inside the congruence subgroup $\begin{pmatrix}
        1+\eps_k\C[\eps_k] & \C[\eps_k]\\
        \C[\eps_k] & 1+\eps_k\C[\eps_k]
    \end{pmatrix}$ or $L=G(\CC[\eps_k])$. In the latter case $H = \mathbf{G}_{n,k}$ and we are done; in the former case, consider $pf$, the composition of $f$ with projection $G(\C[\eps_k])/L\to G(\C)$. This is a surjective map. Take any index $j$ such that $pf$ restricted to $G(\C[\eps_j])$ is nontrivial. Let $\phi$ be the restriction of $pf$ to $G(\C[\eps_j])$. Using \cref{C[eps]-maps} we see that $\phi$ is a composition of projection and an automorphism of $G(\C)$, they are all inner. Since the points $t_0,t_{1},\dots,t_{k}$ are all distinct, the functions of $s$ given by $\frac{\tr^2}{\det}$ of the matrix $g_{t_i,x}(s)$ are all distinct, which is a contradiction.
\end{proof}

\begin{lem}
\label{LemDominant}
    The rational map $\phi_N:\AA^N_{\CC}\to \mathbf{G}_{n,m}$ is dominant, where $N = 4m$.
\end{lem}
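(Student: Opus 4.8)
The plan is to run the standard ``the word map becomes dominant'' argument, exactly as in the proof of \cite[Proposition~3.13]{EFK3}, now fed by the group-generation statement \cref{LemElementsGenerateProduct}. First note that $\mathbf{G}_{n,m}=\prod_{i=0}^{k}\PGL_2(\CC[\eps_i])$ is smooth, connected, hence irreducible, and that $\dim\mathbf{G}_{n,m}=3\sum_{i=0}^{k}(n_i+1)$. Since among the $m+1$ non-infinite marked points the two points $t_{k+1},t_{k+2}$ are left unglued, one has $\sum_{i=0}^{k}(n_i+1)=m-1$, so $\dim\mathbf{G}_{n,m}=3(m-1)<4m=N$. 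For $s\in\CC\setminus\{0,1,x\}$ the element $g_{s,x}$ really lies in $\mathbf{G}_{n,m}$, because its $i$-th determinant equals the unit $s(s-1)(s-x)(t_i+\eps_i-x)\in\CC[\eps_i]^{\times}$; hence each $\phi_j\colon\AA^j_{\CC}\dashrightarrow\mathbf{G}_{n,m}$ is a morphism on a dense open set, and $W_j:=\overline{\phi_j(\AA^j_{\CC})}$ is an irreducible closed subvariety (closure of the image of the irreducible $\AA^j$). It suffices to produce some $j_0\le\dim\mathbf{G}_{n,m}$ with $W_{j_0}=\mathbf{G}_{n,m}$: then for $N\ge j_0$ we have $\overline{\phi_N(\AA^N_{\CC})}\supseteq\overline{\phi_{j_0}(\AA^{j_0}_{\CC})}\cdot g_{s^{\ast},x}^{\,N-j_0}=W_{j_0}\,g_{s^{\ast},x}^{\,N-j_0}$ for any fixed admissible $s^{\ast}$, a right-translate of $\mathbf{G}_{n,m}$, so $\phi_N$ is dominant.

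Next I record two facts about the chain $W_1,W_2,\dots$. (i) \emph{Monotonicity}: $\phi_{j+1}(\AA^{j+1}_{\CC})\supseteq\phi_j(\AA^j_{\CC})\cdot g_{s^{\ast},x}$, a right-translate, so $W_{j+1}\supseteq W_j\,g_{s^{\ast},x}$ and $\dim W_{j+1}\ge\dim W_j$. (ii) \emph{Absorbing plateau}: if $\dim W_{j+1}=\dim W_j$ for one index $j$, then $\dim W_{j'}=\dim W_j$ for all $j'\ge j$. This uses that $C:=W_1$ is a curve ($\phi_1$ is non-constant, e.g.\ the trace-squared-over-determinant of a component of $g_{s,x}$ is non-constant in $s$) and that $C=C^{-1}$ (since $g_{s,x}^{-1}=g_{\sigma(s),x}$ with $\sigma(s)=\tfrac{x(s-1)}{s-x}$). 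From $W_{j+1}=\overline{W_j\cdot C}$, dominance of the multiplication map $W_j\times C\to W_{j+1}$ and $\dim W_{j+1}=\dim W_j$, a fibre-dimension count shows a generic $u\in W_{j+1}$ satisfies $uC\subseteq W_j$; hence $W_{j+1}\cdot C$ lies in $W_{j+1}$ together with the image of (the complement of a dense open in $W_{j+1}$)$\times\,C$, of dimension $\le(\dim W_{j+1}-1)+\dim C=\dim W_{j+1}$, so $\dim W_{j+2}=\dim W_{j+1}$, and one iterates. Consequently $1=\dim W_1<\dim W_2<\cdots$ strictly until a plateau is reached, so that $\dim W_j\ge j$ up to the first plateau index $j_0$, forcing $j_0\le\dim\mathbf{G}_{n,m}$; write $d$ for the common value $\dim W_j$, $j\ge j_0$.

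It remains to force $d=\dim\mathbf{G}_{n,m}$. By \cref{LemElementsGenerateProduct} the subgroup of $\mathbf{G}_{n,m}$ generated by $\{g_{s,x}:s\in\CC\setminus\{0,1,x\}\}$ is dense, and since $g_{s,x}^{-1}$ is again of the form $g_{s',x}$ this subgroup equals $\bigcup_{j\ge1}\phi_j(\AA^j_{\CC})$. Moreover it is \emph{closed}: the image $Y:=\phi_2(\AA^2_{\CC})$ is an irreducible constructible set with $e\in Y=Y^{-1}$ (take $s_2=\sigma(s_1)$), so by the classical fact that the subgroup generated by an irreducible constructible set through the identity is closed and connected, $\langle Y\rangle$ is closed; and $\bigcup_{j}\phi_j(\AA^j_{\CC})$ is the union of $\langle Y\rangle$ with at most one further coset $g_{s_0,x}\langle Y\rangle$ (the even/odd word grading has order dividing $2$), hence a finite union of closed cosets, hence closed. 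A closed dense subgroup is the whole group, so $\bigcup_{j}\phi_j(\AA^j_{\CC})=\mathbf{G}_{n,m}(\CC)$ and therefore $\bigcup_j W_j\supseteq\mathbf{G}_{n,m}(\CC)$. If $d<\dim\mathbf{G}_{n,m}$, then $\bigcup_j W_j$ would exhibit $\mathbf{G}_{n,m}(\CC)$ as a countable union of proper Zariski-closed subsets of the irreducible variety $\mathbf{G}_{n,m}$ over the uncountable field $\CC$, which is impossible. Hence $d=\dim\mathbf{G}_{n,m}$, so $W_{j_0}$ is an irreducible closed subvariety of full dimension, i.e.\ $W_{j_0}=\mathbf{G}_{n,m}$, with $j_0\le\dim\mathbf{G}_{n,m}<4m=N$; as explained above, $\phi_N$ is then dominant.

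The step that will take the most care is fact (ii), the absorbing-plateau property (and the clean passage from ``some $W_{j_1}$ is everything'' to ``$W_{j_0}$ is everything with $j_0\le\dim\mathbf{G}_{n,m}$''): everything else — invertibility of the $g_{s,x}$, the translate argument for monotonicity, and the extension from $j_0$ to $N$ — is bookkeeping, and the only genuinely new input beyond \cite[Proposition~3.13]{EFK3} is the density statement \cref{LemElementsGenerateProduct}, which is exactly why \cref{LemElementsGeneratesl2,LemElementsGenerateProduct} were proved above.
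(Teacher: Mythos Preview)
Your argument is correct and follows the same overall strategy as the paper (an increasing chain of Zariski closures of word images, which must stabilize at the whole group by the density statement of \cref{LemElementsGenerateProduct}), but the execution differs. The paper restricts to the even-indexed chain $U_0\subset U_2\subset U_4\subset\cdots$, which is monotone \emph{as sets} (not merely in dimension) because any even word can be lengthened by appending $g_{s,x}g_{\sigma(s),x}=e$. Once $U_{2l}=U_{2l+2}$, the stable $U_{2l}$ therefore contains every even word, i.e.\ a subgroup of index at most $2$ in the group generated by $\{g_{s,x}\}$; since that group is dense and $\mathbf{G}_{n,m}$ is connected, $U_{2l}=\mathbf{G}_{n,m}$ immediately. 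This bypasses both your closedness-of-$H$ argument (via the Borel-type fact on subgroups generated by irreducible constructible sets) and the Baire step. On the other hand, your single-step chain buys a sharper stabilization bound: strict dimension growth from $\dim W_1=1$ forces $j_0\le d=3(m-1)<4m$, so $N=4m$ really suffices. The paper's even chain only yields $2l\le 2\cdot 3(m-1)=6(m-1)$, which exceeds $4m$ once $m\ge 4$; this is a harmless slip, since the compactness argument in \cref{compact} only needs \emph{some} finite $N$, but your version actually delivers the stated $N=4m$.
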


\begin{proof}
     Define $\phi_l$ similarly. Let $U_l$ be the Zariski-closure of the image of $\phi_l$, then it is a closed irreducible set in $\mathbf{G}_{n,k}$ of dimension at most $l$. Since $g_{s,x}g_{\sigma(s),x}= 1$, where $\sigma(s) = \frac{x(s-1)}{s-x}$, we have a chain $U_0\subset U_2\subset U_4\subset \dots$, and let $2l$ be the smallest index such that $U_{2l}=U_{2l+2}$. Then $U_{2l}=U_{2l+2}=\dots$, so $U_{2l}\supset H$, so by Step 3, $U_{2l} = \mathbf{G}_{n,k}$. Since $G_{n,k}$ has dimension $3(m-1)<4m$, $U_{4m} = \mathbf{G}_{n,m}$ as desired. 
\end{proof}

\subsection{Spectral decomposition}
\label{SecSpectralDecomp}
\begin{lem}
\label{LemAsymptoticsInfinity}
$H_x$ has asymptotics $2\abs{x}\log\abs{x}$ when $x$ tends to infinity. In other words, for any $\psi\in\cH$ we have $\lim_{x\to\infty}\frac{H_x\psi}{2\abs{x}\log\abs{x}}=\psi$.
\end{lem}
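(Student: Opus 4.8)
\emph{Plan.} I would work entirely from the integral representation $H_x=\int_\CC U_{s,x}\,d\nu(s)$ of \cref{unitary}, where $d\nu(s)=\bigl|\tfrac{x(x-1)}{s(s-1)(s-x)}\bigr|\,ds\,\overline{ds}$. The idea is that as $x\to\infty$ the mass of $\nu$ concentrates, to leading order, on the annulus $B_x=\{\,R(x)<|s|<|x|/R(x)\,\}$ with $R(x)=\log|x|$, and that on $B_x$ the group element $g_{s,x}$ tends to the identity uniformly, so that $U_{s,x}$ acts almost trivially there. Concretely, one writes
\[
\frac{H_x\psi}{2|x|\log|x|}-\psi
=\frac{1}{2|x|\log|x|}\int_\CC\bigl(U_{s,x}\psi-\psi\bigr)\,d\nu(s)
+\Bigl(\tfrac{\nu(\CC)}{2|x|\log|x|}-1\Bigr)\psi .
\]
Since $d\nu$ is the same measure as in \cite{EFK3}, the residue computation there gives $\nu(\CC)=2|x|\log|x|+O(|x|)$ (the leading term coming from $1\ll|s|\ll|x|$, where $d\nu(s)\approx|x|\,|s|^{-2}\,ds\,\overline{ds}$); hence the last term above has norm $O(\|\psi\|/\log|x|)\to 0$.

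\emph{Bulk estimate.} Normalising $g_{s,x,i}=\begin{pmatrix}-(s-1)x&(t_i+\eps_i)s(s-1)\\-(s-x)&s(s-x)\end{pmatrix}$ by its lower–right entry $s(s-x)$, and using that on $B_x$ one has $|s|>R(x)$, $|s-1|\le 2|x|/R(x)$, $|s-x|\ge|x|/2$ and $|s|<|x|/R(x)$, a direct check shows each entry differs from the corresponding entry of the identity by $O(1/R(x))$ (coefficientwise in $\CC[\eps_i]$), uniformly in $s\in B_x$; so $g_{s,x}\to 1$ in $\mathbf{G}_{n,k}=\prod_i\PGL_2(\CC[\eps_i])$ uniformly on $B_x$. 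Because $\rho$ is a strongly continuous unitary representation, for a fixed $\psi\in\cH$ and $\varepsilon>0$ there is a neighbourhood of $1$ on which $\|\rho(g)\psi-\psi\|<\varepsilon$; hence $\|U_{s,x}\psi-\psi\|<\varepsilon$ for all $s\in B_x$ once $|x|$ is large, giving
\[
\frac{1}{2|x|\log|x|}\int_{B_x}\|U_{s,x}\psi-\psi\|\,d\nu(s)\le \varepsilon\,\frac{\nu(\CC)}{2|x|\log|x|}=\varepsilon\,(1+o(1)).
\]

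\emph{Complement and conclusion.} On $\{|s|\le R(x)\}$ we have $|s-x|\ge|x|/2$, so $d\nu(s)\le C|x|\,|s(s-1)|^{-1}\,ds\,\overline{ds}$ and this part integrates to $O(|x|\log R(x))$; on $\{|s|\ge|x|/R(x)\}$ substitute $s=xw$ and use $|xw-1|\ge\tfrac12|x||w|$, which gives $d\nu\le C|x|\,|w|^{-2}|w-1|^{-1}\,dw\,\overline{dw}$ for $|w|\le 2$ (contribution $O(|x|\log R(x))$) and an $O(|x|)$ contribution from $|w|\ge 2$. Hence $\nu(\CC\setminus B_x)=O(|x|\log\log|x|)=o(|x|\log|x|)$, and since $\|U_{s,x}\psi-\psi\|\le 2\|\psi\|$,
\[
\frac{1}{2|x|\log|x|}\int_{\CC\setminus B_x}\|U_{s,x}\psi-\psi\|\,d\nu(s)\le \frac{2\|\psi\|\,\nu(\CC\setminus B_x)}{2|x|\log|x|}\longrightarrow 0 .
\]
Combining the three estimates yields $\limsup_{x\to\infty}\bigl\|\tfrac{H_x\psi}{2|x|\log|x|}-\psi\bigr\|\le\varepsilon$ for every $\varepsilon>0$, which proves the lemma. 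The only delicate points are the measure estimates: that the total mass asymptotic genuinely has constant $2$ (inherited from \cite{EFK3}) and that $\nu(\CC\setminus B_x)$ is of strictly lower order than $|x|\log|x|$, for which the cutoff $R(x)=\log|x|$ is convenient — any $R(x)\to\infty$ with $\log R(x)=o(\log|x|)$ works.
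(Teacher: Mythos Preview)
Your argument is correct and is precisely the approach the paper has in mind: the paper's own proof is just a reference to \cite[Proposition~3.15(i)]{EFK3}, and that proof proceeds exactly by splitting $\int_\CC U_{s,x}\,d\nu(s)$, using that $g_{s,x}\to 1$ (hence $U_{s,x}\psi\to\psi$ by strong continuity) on the bulk where almost all of the $\nu$-mass sits, and bounding the complement crudely via $\|U_{s,x}\psi-\psi\|\le 2\|\psi\|$. One tiny quibble: with the cutoff $R(x)=\log|x|$ your own complement estimate only yields $\nu(\CC)=2|x|\log|x|+O(|x|\log\log|x|)$ rather than $+O(|x|)$, but since the proof only uses $\nu(\CC)/(2|x|\log|x|)\to 1$ this is harmless.
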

\begin{proof}
The proof is completely analogous to the proof of Proposition 3.15(i) in~\cite{EFK3}. Note that $\|x\|$ in~\cite{EFK3} is $\abs{x}^2$ in our case. We also note that due to asymmetry this proof works only for $x=\infty$ and other points with multiplicity one, where the limit is a certain involution $S_i$, but not $t_0$.
\end{proof}

By the spectral theorem for commuting compact self-adjoint operators, we conclude the following. 

\begin{cor}
\label{spectral}
	There is an orthogonal decomposition $\cH = \bigoplus_{l=0}^{\infty} \cH_l$, where $\cH_l$ are finite dimensional joint eigenspaces: for any $\psi_l\in \cH_l$, $H_x\psi_l = \beta_l(x)\psi_l$ where $\beta_l(x)$ are real-valued and continuous in $x$. 
\end{cor}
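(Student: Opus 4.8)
The plan is to read this off from the structural facts already in hand. By the results of \cref{SecBounded} and \cref{SecCompactness}, the family $\{H_x : x\neq t_i,\infty\}$ consists of compact, self-adjoint, pairwise commuting operators on $\cH$ that depend norm-continuously on $x$. The only input still needed for the spectral theorem to yield a genuine orthogonal decomposition—rather than one with an uncontrolled ``kernel'' summand—is that these operators have \emph{zero common kernel}, and this is immediate from \cref{LemAsymptoticsInfinity}: if $H_x\psi=0$ for all admissible $x$, then $\psi=\lim_{x\to\infty}(2\abs{x}\log\abs{x})^{-1}H_x\psi=0$. Only one point of asymptotic control is needed, so the asymmetry noted in \cref{LemAsymptoticsInfinity}—the limit exists at $\infty$ and at the multiplicity-one marked points but not at the collided point $t_0$—is harmless here.

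Next I would reduce the uncountable commuting family to a single operator. Fix a countable dense subset $\{x_1,x_2,\dots\}\subset\PP^1\setminus\{t_i,\infty\}$ and set
\[
T=\sum_{j\ge 1}\frac{2^{-j}}{1+\norm{H_{x_j}}^2}\,H_{x_j}^2 .
\]
The series converges in operator norm, so $T$ is compact, self-adjoint and positive; moreover $\langle T\psi,\psi\rangle=0$ forces $H_{x_j}\psi=0$ for every $j$, hence $H_x\psi=0$ for all $x$ by norm-continuity and density, hence $\psi=0$ by zero common kernel. Thus $\Ker T=0$, and the spectral theorem for a compact positive operator gives an orthogonal decomposition $\cH=\bigoplus_k F_k$ into finite-dimensional eigenspaces $F_k$ of $T$ for nonzero eigenvalues.

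Each $H_x$ commutes with every $H_{x_j}$, hence with $T$ (a norm-limit of polynomials in the $H_{x_j}$), so $H_x$ preserves each $F_k$. On the finite-dimensional space $F_k$ the commuting self-adjoint operators $H_x|_{F_k}$ are simultaneously diagonalizable, giving $F_k=\bigoplus_l F_{k,l}$ with $H_x$ acting on $F_{k,l}$ by a scalar $\beta_{k,l}(x)$. Grouping the spaces $F_{k,l}$ over all $k$ according to coincidence of the eigenvalue function $x\mapsto\beta_{k,l}(x)$ produces the decomposition $\cH=\bigoplus_{l}\cH_l$ into common eigenspaces, indexed by $l\in\NN$. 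Each $\cH_l$ is finite-dimensional: by zero common kernel there is $x_0$ with $\beta_l(x_0)\neq 0$, and then $\cH_l\subseteq\Ker(H_{x_0}-\beta_l(x_0)\,\id)$, which is finite-dimensional since $H_{x_0}$ is compact and $\beta_l(x_0)\neq 0$. Finally, for a unit vector $\psi_l\in\cH_l$ we have $\beta_l(x)=\langle H_x\psi_l,\psi_l\rangle$, which is real because $H_x$ is self-adjoint and continuous in $x$ because $x\mapsto H_x$ is norm-continuous.

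The proof has no genuinely hard step; the one point that is not quite automatic—and the reason for introducing the auxiliary operator $T$—is that for an \emph{uncountable} commuting family of compact self-adjoint operators the common eigenspaces need not span a priori, so one passes through a countable dense subset of parameters and uses the norm-continuity of $x\mapsto H_x$. Everything else is the standard spectral theory of compact self-adjoint operators.
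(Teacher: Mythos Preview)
Your proof is correct and follows essentially the same approach as the paper: invoke \cref{LemAsymptoticsInfinity} to obtain trivial common kernel, apply the spectral theorem for commuting compact self-adjoint operators, and deduce continuity of $\beta_l(x)$ from norm-continuity of $H_x$. The paper simply cites ``the spectral theorem for commuting compact self-adjoint operators'' as a black box, whereas you unpack that step via the auxiliary operator $T$ built from a countable dense set of parameters; this extra care is justified but not a different strategy.
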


\begin{proof}
	Using Lemma~\ref{LemAsymptoticsInfinity} we get that the operators $H_x$ have trivial common kernel, so all $\cH_l$ are finite dimensional. Continuity of $\beta_l(x)$ follows from norm-continuity of $H_x$.
\end{proof}

\section{Gaudin Hamiltonians for $\PGL_2(\CC[\eps])$}
\label{SecGaudin}
In this section we will prove the analog of Corollary 4.14 in~\cite{EFK3}. Namely, we will prove that each eigenvalue $\beta_k$ satisfies an $\SL_2$-oper differential equation. 

We used an irreducible representation of the group $\PGL(2,\C[\eps])$ to obtain a compact formula for Hecke operators. Similarly, we will use Lie algebra $\sl_2(\C[\eps])$ to obtain a convenient expression for differential operators.

In this section we consider slightly more general situation than before and assume that we have $k$ distinct points in the limit and the infinite point. Suppose that $i$-th point is the result of gluing $n_i$ points and there was no gluing at infinity. Let $\C[\eps_i]=\C[x]/(x^{n_i})$.

We have the following compact expression for the generating function of classical Gaudin elements for $\sl_2$~\cite{FrenkelBetheAnsatz},~\cite{Sklyanin}: $\sum\frac{G_i}{x-t_i}=e(x)f(x)+\tfrac14h(x)^2+\tfrac12h'(x)$. Here for $a\in\sl_2$ we have \[a(x)=\sum\frac{a_i}{x-t_i},\] where $a_i$ is $a$ in the $i$-th tensor factor surrounded by ones. 

Consider the same expression $G(x)=e(x)f(x)+\tfrac14h(x)^2+\tfrac12h'(x)$ where for $a\in\sl_2$ we define \[a(x)=\sum_{i,j}\frac{\eps^j a_i}{(x-t_i)^{j+1}}.\] We can also think of this as a generating function of a commutative subalgebra of $\bigotimes U_{\C}(\sl_2(\C[\eps_i]))$ corresponding to a quantum integrable system:
\begin{lem}
\[G(x)=\sum_{i,j} \frac{G_i^{(j)}}{(x-t_i)^{j+1}},\] where $G_i^{(0)}=G_i$ are classical Gaudin elements and \[G_i^{(j)}=\sum_{a+b=j-1}\eps_i^a e\cdot \eps_i^b f+\eps_i^b f\cdot \eps_i^a e+\tfrac12\eps_i^a h\cdot \eps_i^b h.\]    
\end{lem}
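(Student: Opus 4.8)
The proof is a direct partial-fraction computation in the variable $x$. The plan is to expand $G(x)=e(x)f(x)+\tfrac14 h(x)^2+\tfrac12 h'(x)$ as a finite sum of terms $c\,(x-t_i)^{-(j+1)}$ with $c\in\bigotimes_i U_{\C}(\sl_2(\C[\eps_i]))$, and to identify the coefficient of $(x-t_i)^{-(j+1)}$ with the claimed $G_i^{(j)}$.

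First I would record the only algebraic input needed: since $\eps_i$ is nilpotent and central in $U_{\C}(\sl_2(\C[\eps_i]))$ and $[e,f]=h$, one has $[\eps_i^a e,\eps_i^b f]=\eps_i^{a+b}h$. Summing these relations over $a,b$ and using $\#\{(a,b):a+b=c,\ 0\le a,b\le n_i-1\}=c+1$ in the relevant range gives $[e(x),f(x)]=-h'(x)$, so that
\[
G(x)=\tfrac12\bigl(e(x)f(x)+f(x)e(x)\bigr)+\tfrac14 h(x)^2,
\]
which is the symmetric form best suited to the symmetrized expression defining $G_i^{(j)}$.

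Next, for each of the three products $e(x)f(x)$, $f(x)e(x)$, $h(x)^2$ I would split the sum over pairs of tensor factors into a \emph{diagonal} part ($i=i'$) and an \emph{off-diagonal} part ($i\neq i'$). The diagonal part of $e(x)f(x)$ is $\sum_{a,b}\eps_i^a e_i\cdot\eps_i^b f_i\big/(x-t_i)^{a+b+2}$, already in partial-fraction form, so it contributes to $(x-t_i)^{-(j+1)}$ exactly through $a+b=j-1$; adding the analogous diagonal parts of $f(x)e(x)$ and $h(x)^2$ and using the commutation rule to symmetrize reassembles the displayed expression $\sum_{a+b=j-1}\bigl(\eps_i^a e\cdot\eps_i^b f+\eps_i^b f\cdot\eps_i^a e+\tfrac12\eps_i^a h\cdot\eps_i^b h\bigr)$, which is all of $G_i^{(j)}$ in the range $j\ge n_i$ where the off-diagonal terms do not contribute. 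For $i\neq i'$ I would invoke the elementary identity writing $(x-t_i)^{-p}(x-t_{i'})^{-q}$ as a combination of $(x-t_i)^{-r}$ with $1\le r\le p$ and $(x-t_{i'})^{-s}$ with $1\le s\le q$, the coefficients being polynomials in $(t_i-t_{i'})^{-1}$; these contribute to $G_i^{(j)}$ only for $0\le j\le n_i-1$, and at the simple pole $j=0$, after symmetrizing in $(i,i')$, they reproduce the classical Gaudin element $G_i=G_i^{(0)}=\sum_{i'\neq i}\Omega_{ii'}/(t_i-t_{i'})$ with $\Omega_{ii'}=e_if_{i'}+e_{i'}f_i+\tfrac12 h_ih_{i'}$, consistent with the $j=0$ case in which the sum over $a+b=-1$ is empty.

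The main obstacle is purely the bookkeeping of boundary terms and normalizations: the sum $\sum_{a+b=j-1}$ is truncated by $0\le a,b\le n_i-1$ for $j$ close to $2n_i$, the off-diagonal partial-fraction tails must be checked to drop out exactly in the range $j\ge n_i$, and the antisymmetric piece $\tfrac12 h'(x)$ of $e(x)f(x)$ must be apportioned among the $G_i^{(j)}$ so that the symmetrization and the off-diagonal contributions fit together at every order simultaneously. Once the index ranges and conventions are pinned down this is a routine verification.
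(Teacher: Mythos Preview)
The paper states this lemma without proof, so there is no argument to compare against; your partial-fraction approach is the natural one and is correct in outline. In particular, the symmetrization $G(x)=\tfrac12(e(x)f(x)+f(x)e(x))+\tfrac14 h(x)^2$ via $[e(x),f(x)]=-h'(x)$ is right, and the diagonal ($i=i'$) part of this symmetric expression gives exactly the displayed sum $\sum_{a+b=j-1}\bigl(\eps_i^a e\cdot\eps_i^b f+\eps_i^b f\cdot\eps_i^a e+\tfrac12\eps_i^a h\cdot\eps_i^b h\bigr)$ as the coefficient of $(x-t_i)^{-(j+1)}$.

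One point you correctly flag as an obstacle deserves a sharper statement rather than the hope that things ``fit together''. For $1\le j\le n_i-1$ the partial-fraction expansion of the off-diagonal products $(x-t_i)^{-p}(x-t_{i'})^{-q}$ genuinely contributes cross-factor terms such as $\eps_i^a e_i\cdot \eps_{i'}^{b} f_{i'}$ to the coefficient of $(x-t_i)^{-(j+1)}$, and these do \emph{not} cancel. Hence the displayed formula cannot literally be all of $G_i^{(j)}$ in that range. The consistent reading---supported by the paper's subsequent claim that $G_i^{(j)}$ is central in $U(\sl_2(\C[\eps_i]))$ for $j>0$ and acts as a scalar for $n_i\le j<2n_i$---is that the displayed formula is intended for $j\ge n_i$ (where, as you note, the off-diagonal tails are absent), while for $0\le j<n_i$ the elements $G_i^{(j)}$ are simply \emph{defined} by the expansion and contain additional cross-terms, with $G_i^{(0)}$ recovering the classical Gaudin element. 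Your computation already proves all of this once you stop expecting the off-diagonal pieces to drop out for small $j$.
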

Note that $G_i^{(j)}$ for $j\ge 0$ are in the center of $U_{\C}(\sl_2(\C[\eps]))$.

The formula from \cref{formula} can be written as
\[H_x\psi=\int_{\C} \rho(g_{s,x})\psi ds\ovl{ds},\] where $g_{s,x,i}=\begin{pmatrix}
0 & t_i-x+\eps_i\\
-1 & s
\end{pmatrix}$. The representation $\rho$ on $\bigotimes L^2(\Pone[\eps_i])$ is defined in the $i$-th coordinate as the irreducible representation of $\PGL_2(\C[\eps_i])$ in $L^2(\Pone[\eps_i])$ described in \cref{SecRepresentationsOfGEps}


The differential operators are obtained as an image of all $G_i^{(j)}$ in \[D((\Pone[\eps_1]\times\cdots\times\Pone[\eps_k])/B,\mc{L}),\] where $\mc{L}$ is a holomorphic line bundle described in \cref{SecRepresentationsOfGEps}. It can be checked that $G_i^{(j)}$ acts a number $l_i^{(j)}$ when $n_i\leq j<2n_i$ and acts as zero when $j\geq 2n_i$. Abusing notation, we will denote the differential operators also $G_i^{(j)}$.

By $D$ of the quotient we mean the quantum Hamiltonian reduction  of the algebra of differential operators. The only thing we need to check is that $G_i^{(j)}$ commute with $\sum e_l$, $\sum h_l$. For $j>0$ this is true because $G_i^{(j)}$ is a central element, for $j=0$ we can see it directly.

We are ready to prove the differential equation for Hecke operators:

\begin{prop}
\label{PropDifferentialEquation}
Let $\psi$ be a smooth function with compact support modulo translation and dilations. Then
\[\partial_x^2(H_x\psi)=G(x)\psi\] in the sense that both sides are defined and equal to each other for $x\neq t_i,\infty$ on the set of points where no two coordinates are equal to each other. 
\end{prop}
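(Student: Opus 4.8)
The plan is to establish the identity by computing both sides under the integral sign defining $H_x$ and showing that the second $x$-derivative of the integrand matches the action of the generating function $G(x)=e(x)f(x)+\tfrac14 h(x)^2+\tfrac12 h'(x)$. First I would write $H_x\psi=\int_\C \rho(g_{s,x})\psi\, ds\,\ovl{ds}$ with $g_{s,x,i}=\begin{pmatrix} 0 & t_i-x+\eps_i\\ -1 & s\end{pmatrix}$, noting that $x$ enters only through the $(1,2)$-entries $t_i-x+\eps_i$, and only linearly. So $\partial_x g_{s,x,i}$ and $\partial_x^2 g_{s,x,i}$ are easy to describe: in each factor, differentiating in $x$ amounts to the right-invariant vector field on $\PGL_2(\C[\eps_i])$ generated by the element corresponding to $-\partial_b$, i.e.\ (after using $g_{s,x,i}^{-1}\partial_x g_{s,x,i}$) a specific $\sl_2(\C[\eps_i])$-valued function of $s$. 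The key computation is then: $g_{s,x,i}^{-1}\partial_x g_{s,x,i}$ and the resulting first- and second-order differential operators on $\Pone[\eps_i]$, expressed via $\rho$, should be organized so that summing the contributions over $i$ and integrating the $s$-dependence (which contributes the total derivative that makes cross terms vanish) reproduces exactly $e(x)f(x)+\tfrac14 h(x)^2+\tfrac12 h'(x)$ acting through $\rho$.

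Concretely, the steps in order: (1) Justify differentiating under the integral sign — here I would invoke the convergence and continuity estimates from \cref{PropHxWellDefinedAndBounded}, applied now to $\psi$ smooth with compact support mod translation/dilation, checking that $\partial_x$ and $\partial_x^2$ of the integrand are again dominated by an integrable function uniformly for $x$ in a neighborhood of any point with no two coordinates equal and $x\neq t_i,\infty$; this is where the hypothesis on $\psi$ (smoothness, compact support mod the group) is used. (2) Compute $g_{s,x,i}^{-1}\partial_x g_{s,x,i}=\frac{1}{\det}\begin{pmatrix} 0 & -1\\ 0 & 0\end{pmatrix}\cdot(\text{stuff})$ — more precisely express it in the basis $e,h,f$ of $\sl_2(\C[\eps_i])$ with coefficients that are rational in $s$ and polynomial in $\eps_i$, matching the shape $\frac{\eps_i^j a_i}{(x-t_i)^{j+1}}$ after the $s$-integration. (3) Show $\partial_x(\rho(g_{s,x})\psi)=\rho(g_{s,x})(X_{s,x}\psi)$ where $X_{s,x}$ is the corresponding element of $\bigoplus_i\sl_2(\C[\eps_i])$ acting by the derived representation $d\rho$, and then $\partial_x^2(\rho(g_{s,x})\psi)=\rho(g_{s,x})\big((X_{s,x}^2+\partial_x X_{s,x})\psi\big)$. (4) Perform the $s$-integration: here the measure $\nu(s)$ and the explicit $s$-dependence conspire so that the terms in $X_{s,x}^2+\partial_x X_{s,x}$ that are not of the required ``Gaudin'' form integrate to zero (total derivatives in $s$, or odd under the involution $\sigma$), while the surviving part is precisely $\int_\C \rho(g_{s,x})\, d\nu(s)$ composed with the constant (in $s$) operator $G(x)=e(x)f(x)+\tfrac14 h(x)^2+\tfrac12 h'(x)$; pulling $G(x)$ out gives $\partial_x^2 H_x\psi=G(x)\psi$ (with $G(x)$ now meaning the image in differential operators on $\prod\Pone[\eps_i]/B$, the $l_i^{(j)}$-truncation being automatic from the representation being the irreducible quotient).

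\textbf{Main obstacle.} I expect the crux to be step (4): matching the bookkeeping between the second-order differential operator $X_{s,x}^2+\partial_x X_{s,x}$ coming from naively differentiating the group cocycle, and the symmetric ``$ef+fe+\tfrac12 hh$'' normal-ordering in $G_i^{(j)}$. The $s$-integration must turn $X_{s,x}^2$ (a square of a single $\sl_2$-valued function) into the correct symmetrized product across different $\eps_i$-slots and across the Gaudin poles, and it must absorb the $\partial_x X_{s,x}$ term into the $\tfrac12 h'(x)$ correction; getting the quadratic terms in $s$ to cancel against the $\partial_x$ term (mirroring how $\partial_x^2+\sum\frac{1}{4(x-t_i)^2}$ appears in \cref{EqDifferentialEquationClassic}) is the delicate point, and is the wild-ramification analogue of the computation behind \cref{EqDifferentialEquationClassic} in \cite{EFK3}. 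A secondary technical point is ensuring all manipulations are valid pointwise on the locus where no two coordinates coincide and $x\neq t_i,\infty$, since $\rho$ and the vector fields have poles exactly there; the restriction in the statement to that locus is what makes this clean.
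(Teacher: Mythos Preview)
Your overall strategy—differentiate $H_x\psi=\int_\C \rho(g_{s,x})\psi\,ds\,\ovl{ds}$ under the integral, express the result via the derived $\sl_2(\C[\eps_i])$-action, and match against $G(x)$—is the same as the paper's. But in step (2) you made the choice that makes step (4) hard: you compute the \emph{right} logarithmic derivative $g_{s,x,i}^{-1}\partial_x g_{s,x,i}$, which gives an $s$-dependent element (indeed $X_{s,x}=s\,e(x)-\tfrac12 h(x)$), and then you are forced to hope that the quadratic-in-$s$ expression $X_{s,x}^2+\partial_x X_{s,x}$ integrates against $\rho(g_{s,x})$ to produce $G(x)$. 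Your proposed mechanisms for that (the measure $\nu$, parity under the involution $\sigma$) are not the right tools here and do not constitute an argument; note also that the raw integral uses $ds\,\ovl{ds}$, not $d\nu$.

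The paper instead computes the \emph{left} logarithmic derivative $\partial_x g_{s,x,i}\,g_{s,x,i}^{-1}$, and the whole point is that this is \emph{independent of $s$}: one finds
\[
\partial_x g_{s,x}\,g_{s,x}^{-1}=-\tfrac12\,h(x),\qquad
\partial_s g_{s,x}\,g_{s,x}^{-1}=f(x),
\]
both constant in $s$. Consequently $\partial_x^2(H_x\psi)=\bigl(\tfrac14 h(x)^2-\tfrac12 h'(x)\bigr)H_x\psi$ drops out with no $s$-bookkeeping at all, and the second identity says $\partial_s(\rho(g_{s,x})\psi)=f(x)\,\rho(g_{s,x})\psi$, so that after integrating a total $\partial_s$-derivative one may freely add $Df(x)$ (any first-order $D$) to the operator on the left; taking $D=e(x)$ and using $[e(x),f(x)]$ converts $\tfrac14 h(x)^2-\tfrac12 h'(x)$ into $G(x)=e(x)f(x)+\tfrac14 h(x)^2+\tfrac12 h'(x)$. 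That is the entire computation—no cancellation miracles, no appeal to $\sigma$ or to $\nu$. So your diagnosis of step (4) as the crux is accurate, but the missing idea is simply to switch from $g^{-1}\partial_x g$ to $\partial_x g\,g^{-1}$ and observe the $s$-independence.
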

\begin{proof}
Denote $\rho(g_{s,x})$ by $g_{s,x}$ for convenience. We have $H_x\psi=\int_{\C} g_{s,x}\psi ds \ovl{ds}$. Using the standard lemma on differentiating under the integral sign and reasoning similarly to the proof of Proposition~\ref{PropHxWellDefinedAndBounded} we get \[\partial_x(H_x\psi)=\int_{\C}(\partial_x g_{s,x}g_{s,x}^{-1})g_{s,x}\psi ds\ovl{ds}.\] Here \[\partial_x g_{s,x,i}g_{s,x,i}^{-1}=\begin{pmatrix}
    0 & -1\\
    0 & 0
\end{pmatrix}\cdot\frac{1}{x-t_i-\eps_i}\begin{pmatrix}
    s & x-t_i-\eps\\
    1 & 0
\end{pmatrix}=\frac{1}{x-t_i-\eps_i}\begin{pmatrix}
    -1 & 0\\
    0 & 0
\end{pmatrix}\] is an element of $\sl_2(\C[\eps_i])$. We have $\begin{pmatrix}
    -1 & 0\\
    0 & 0
\end{pmatrix}=-\tfrac12h$, so that \[\partial_x g_{s,x,i}g_{s,x,i}^{-1}=-\frac{1}{2(x-t_i-\eps_i)}h=-\tfrac12\sum (x-t_i)^{-j-1}\eps_i^j h\] and \[\partial_x g_{s,x}g_{s,x}^{-1}=-\tfrac12 h(x).\]

Similarly computing the second derivative we get
\[\partial_x^2(H_x\psi)=(\tfrac14 h(x)^2-\tfrac12 h'(x))H_x\psi. \]
The differential operator $\tfrac14 h(x)^2-\tfrac12 h'(x)$ is not $B$-invariant, so it does not give a differential operator on $\prod \Pone[\eps_i]/B$. To fix this note that for any smooth function $\phi$ we have $\partial_s(g_{s,x}\phi)=(\partial_s g_{s,x})g_{s,x}^{-1}g_{s,x}\phi$. We have
\[\partial_s g_{s,x,i}g_{s,x,i}^{-1}=\begin{pmatrix}
    0 & 0\\
    0 & 1
\end{pmatrix}\cdot\frac{1}{x-t_i-\eps_i}\begin{pmatrix}
    s & x-t_i-\eps\\
    1 & 0
\end{pmatrix}=\frac{1}{x-t_i-\eps_i}\begin{pmatrix}
    0 & 0\\
    1 & 0
\end{pmatrix}.\] Similarly to the above we get $\partial_s g_{s,x}g_{s,x}^{-1}=f(x)$. Hence for any first-order differential operator $D$ we have \[(\tfrac14 h(x)^2-\tfrac12 h'(x))H_x\psi=(\tfrac 14 h(x)^2-\tfrac12h'(x)+f(x)D)H_x\psi.\] Taking $D=e(x)$ and using $[e(x),f(x)]=h'(x)$ we get the claim of the theorem.
\end{proof}
\begin{rem}
In particular, the proof of Proposition~\ref{PropDifferentialEquation} gives a simpler explicit proof of Proposition 4.3 in~\cite{EFK3} and Proposition 3.7 in~\cite{EFK4}.
\end{rem}

Arguing similarly to the proof of Proposition 4.6 of~\cite{EFK3} (with $\psi\in\cH_K$ instead of $\eta$) we get the following: for any $\psi\in\cH_k$ the distribution $G_i^{(j)}\psi$ equals to $\mu_{i,j,k}\psi$ for some complex number $\mu_{i,j,k}$ that depends on $\beta_k$ but not $\psi$.

Applying Proposition~\ref{PropDifferentialEquation} to $\psi\in\cH_k$ again we get the following:
\begin{cor}
The function $\beta_k(x)$ satisfies the differential equation
\[L(\mathbf{\mu}_k)\beta_k(x)=0,\]
where \[L(\mathbf{\mu}_k)=\partial_x^2+\sum_i\sum_{j=n_i}^{2n_i-1} \frac{l_i^{(j)}}{(x-t_i)^{j+1}}+\sum_{i,j<n_i}\frac{\mu_{i,j,k}}{(x-t_i)^{j+1}}\] is an $\SL_2$-oper.
\end{cor}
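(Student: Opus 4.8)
The plan is to obtain the equation by substituting a joint eigenfunction into \cref{PropDifferentialEquation} and cancelling that eigenfunction. Fix a nonzero $\psi\in\cH_k$; by \cref{spectral} we have $H_x\psi=\beta_k(x)\psi$, and $\psi$ does not depend on the Hecke parameter $x$, so $\partial_x^2(H_x\psi)=\paren{\partial_x^2\beta_k(x)}\psi$. For the other side I would combine the two facts recorded just before the statement: in the irreducible representation attached to $\chi_i$ the central element $G_i^{(j)}$ acts by the \emph{fixed} scalar $l_i^{(j)}$ for $n_i\le j<2n_i$ and by $0$ for $j\ge 2n_i$; and, by the argument patterned on \cite[Prop.~4.6]{EFK3}, $G_i^{(j)}\psi=\mu_{i,j,k}\psi$ as distributions for $0\le j<n_i$, with $\mu_{i,j,k}$ depending only on $\beta_k$. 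Feeding these into the lemma that expresses $G(x)$ as $\sum_{i,j}\frac{G_i^{(j)}}{(x-t_i)^{j+1}}$, every term with $j\ge 2n_i$ drops out and the others act on $\psi$ as multiplication by the rational function
\[
V(x)\;=\;\sum_i\sum_{j=n_i}^{2n_i-1}\frac{l_i^{(j)}}{(x-t_i)^{j+1}}+\sum_i\sum_{j<n_i}\frac{\mu_{i,j,k}}{(x-t_i)^{j+1}}
\]
appearing in $L(\mu_k)$. Since $\beta_k(x)$ is a scalar for the $\cH$-action it passes through $G(x)$, so combining with \cref{PropDifferentialEquation} and carrying out the same manipulation as in \cite{EFK3} (substituting $H_x\psi=\beta_k(x)\psi$ and pulling out the scalar $\beta_k(x)$) yields, on the locus $x\ne t_i,\infty$, the distributional identity $\paren{\partial_x^2\beta_k(x)}\psi+V(x)\beta_k(x)\psi=0$.

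The step that carries the real content is making this legitimate for an honest $L^2$-eigenfunction, since \cref{PropDifferentialEquation} was established only for smooth $\psi$ with compact support modulo translations and dilations. This is a regularity argument of the type in \cite[\S 4]{EFK3}: one uses norm-continuity, indeed norm-differentiability, of $x\mapsto H_x$ (\cref{compact} together with the unitary-integral formula \cref{unitary}) to give meaning to $\partial_x^2(H_x\psi)$, and one checks that both $\partial_x^2(H_x\psi)=G(x)\psi$ and $G_i^{(j)}\psi=\mu_{i,j,k}\psi$ persist as identities of distributions on the open subset of $\Bun$ where no two coordinates agree. Granting this, $\psi\not\equiv 0$ is nonzero on a dense open set, so pairing the identity above with a test function supported where $\psi\ne 0$ and dividing out $\psi$ leaves $\partial_x^2\beta_k(x)+V(x)\beta_k(x)=0$, i.e.\ $L(\mu_k)\beta_k=0$.

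Finally, that $L(\mu_k)$ is an $\SL_2$-oper is immediate from its form: it is a monic second-order scalar operator with vanishing first-order coefficient, which is exactly the normal form of an oper for the Langlands dual group $\SL_2$ of $\PGL_2$, and the pole of order at most $2n_i$ at each $t_i$ records the local singularity type, now irregular whenever $n_i\ge 2$. The main obstacle in the whole argument is therefore not the final cancellation but the distributional bookkeeping of the middle step — giving meaning to $G_i^{(j)}\psi$ and $\partial_x^2(H_x\psi)$ for genuine eigenfunctions and checking that \cref{PropDifferentialEquation} and the identity $G_i^{(j)}\psi=\mu_{i,j,k}\psi$ survive there — which is precisely where one leans on the analytic arguments underlying \cite[\S 4]{EFK3}.
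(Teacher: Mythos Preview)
Your proposal is correct and follows the same route as the paper. The paper's own argument for this corollary is literally the one line ``Applying Proposition~\ref{PropDifferentialEquation} to $\psi\in\cH_k$ again we get the following,'' together with the preceding sentence (attributed to the argument of \cite[Prop.~4.6]{EFK3}) that $G_i^{(j)}\psi=\mu_{i,j,k}\psi$ as distributions. You have spelled out the substitution and cancellation in more detail and, usefully, flagged the regularity issue --- that \cref{PropDifferentialEquation} is stated only for smooth compactly supported $\psi$ --- which the paper silently absorbs into the reference to \cite{EFK3}.
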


\subsection*{Acknowledgments} 

Most of this paper was written during the SPUR summer program at MIT. The authors thank Pavel~Etingof for suggesting this project and helpful remarks on the previous versions of this paper, David~Jerison for organizing the SPUR program, and Prof.~Etingof and Prof.~Jerison for many discussions and helpful suggestions. 

\bibliography{bib}

\end{document}